\documentclass[12pt,a4paper]{article}  
\usepackage[margin=1in]{geometry}
\usepackage{titlesec}
\titleformat{\section}{\normalsize\bfseries}{\thesection}{1em}{}
\titleformat{\subsection}{\normalsize\bfseries}{\thesubsection}{1em}{}
\titleformat{\subsubsection}{\small\bfseries}{\thesubsubsection}{1em}{}
\usepackage{amsmath, amssymb, amsfonts, mathrsfs}  
\usepackage{amsthm}       
\usepackage{mathtools}     
\usepackage{thmtools}     

\usepackage{graphicx}     
\usepackage{float} 
\usepackage{tikz}        
\usepackage{caption}      
\usepackage{subcaption}    
\usepackage{epstopdf}      

\usepackage[numbers, sort&compress]{natbib}      
\usepackage{xcolor}
\definecolor{mycyanblue}{HTML}{0080AC}
\usepackage{hyperref}
\hypersetup{
    colorlinks=true,
    allcolors=mycyanblue
}

\usepackage{booktabs}      
\usepackage{array}         
\usepackage{multirow}      
\usepackage{enumitem}     

\theoremstyle{plain}       
\newtheorem{theorem}{Theorem}[section]   
\newtheorem{lemma}[theorem]{Lemma}
\newtheorem{corollary}[theorem]{Corollary}
\newtheorem{proposition}[theorem]{Proposition}

\theoremstyle{definition}

\newtheorem{remark}[theorem]{Remark}

\title{Main-Factor Allocation and Coronal Realizability for Generalized Cospectral Mates of Trees}
\author{Chaochao Zhu$^{1}$ \\
    $^{1}$College of Finance and Mathematics, West Anhui University, \\ Lu'an 237012, China \\
    \texttt{zccjsbz@amss.ac.cn}} 
\date{}  

\begin{document}

\maketitle

\begin{abstract}
We study how irreducible factors associated with main eigenvalues constrain the connected components of generalized cospectral mates. Let $M_G(x)$ be the main polynomial of a graph $G$, and let $\kappa_{\mathrm m}(G)$ denote the sum of the multiplicities in $\phi_G(x)$ of the irreducible factors dividing $M_G(x)$. We prove that every graph $H$ with the same characteristic polynomial and main polynomial as $G$ satisfies \(c(H)\leq \kappa_{\mathrm m}(G).\) Consequently, if $T$ is a tree and $H$ is generalized cospectral with $T$, then \(\beta(H)=c(H)-1\leq \kappa_{\mathrm m}(T)-1.\) We develop the case $\kappa_{\mathrm m}(T)=2$ in detail. Any disconnected generalized cospectral mate is the union of a tree and a connected bipartite unicyclic graph, and the coronals of its two components are uniquely prescribed by the canonical decomposition of the coronal of $T$ with respect to the two irreducible main factors. This turns the existence of a disconnected mate into a component-realizability problem. Factor moments yield realizability and tree-forcing obstructions, while matching data provide complementary characteristic-polynomial information. In particular, the unique cycle of any disconnected mate has length at least $6$, and finitely many matching identities, together with component-coronal realizability, certify generalized cospectrality under an explicit degree bound. As an application, we show that the double star $D(2m,m+1)$ is DGS but not DS whenever $m\geq2$ and neither $m$ nor $2m+2$ is a perfect square. In particular, $D(8t+4,4t+3)$, $t\geq0$, gives an explicit infinite family of such graphs.
\end{abstract}

\noindent\textbf{Keywords:} generalized spectrum; generalized cospectrality; main eigenvalues; main polynomial; graph coronal; trees.

\section{Introduction}\label{sec:introduction}
Whether a graph is determined by its adjacency spectrum is a basic question in spectral graph theory. A graph is said to be \emph{determined by its spectrum} (DS) if every graph with the same adjacency spectrum is isomorphic to it; see \cite{vanDamHaemers2003,vanDamHaemers2009,BrouwerHaemers2012}. For trees, adjacency cospectrality is abundant: Schwenk proved that almost all trees have a nonisomorphic cospectral mate \cite{Schwenk1973}.

The \emph{generalized spectrum} of a graph consists of its adjacency spectrum together with that of its complement. A graph is \emph{determined by its generalized spectrum} (DGS) if every graph with the same generalized spectrum is isomorphic to it. Two graphs with the same generalized spectrum are called \emph{generalized cospectral}. For a graph $G$ of order $n$, let $A(G)$ denote its adjacency matrix, \(\phi_G(x):=\det(xI-A(G)),\) its characteristic polynomial, and let $\mathbf1$ denote the all-one
vector.

The rank-one perturbation by the all-one matrix underlying generalized cospectrality goes back at least to Johnson and Newman \cite{JohnsonNewman1980}; see also
\cite{vanDamHaemersKoolen2007}. Many DGS criteria are formulated in terms of the walk matrix
\(
W(G)=
[\,\mathbf1,A(G)\mathbf1,\ldots,A(G)^{n-1}\mathbf1\,]
\)
and arithmetic properties of rational orthogonal matrices \cite{WangXu2006,Wang2013,Wang2017,MaoLiuWang2015, LiuSiemonsWang2019,QiuEtAl2023,WangWangZhu2023}. For trees, Yang and Wang showed that irreducibility of the characteristic polynomial over $\mathbb Q$ removes the usual oddness and square-freeness assumptions from a walk-matrix criterion
\cite{YangWang2024}. Lin, Allem, Trevisan, Wang and Zhang later constructed an infinite family of double stars that are DGS but not DS; irreducibility of the nonzero characteristic factor is central to their argument \cite{LinEtAl2026}.

The approach here is different. An adjacency eigenvalue $\lambda$ of $G$ is called \emph{main} if its eigenspace is not orthogonal to $\mathbf1$. Main eigenvalues and their connections with walk matrices and graph structure have been studied extensively; see, for example, Hagos \cite{Hagos2002}. If $\lambda_1,\ldots,\lambda_s$ are the distinct main eigenvalues, define the main polynomial by
\[
M_G(x):=\prod_{i=1}^{s}(x-\lambda_i).\]
Main and related spectral polynomials have appeared in several contexts \cite{ScirihaFarrugia2011,CardosoGomesPinheiro2022}; see also the complementary plain polynomial in
\cite{WangWang2025}.

We use the irreducible factors of $M_G$ as component data. Write \(\phi_G(x)=\prod_{i=1}^{r}f_i(x)^{m_i},\) where the $f_i$ are distinct monic polynomials irreducible over $\mathbb Q$, and set
\[
\kappa_{\mathrm m}(G)
:=\sum_{f_i\mid M_G}m_i.
\]
Our first result is the component bound \(c(H)\leq \kappa_{\mathrm m}(G)\) for every graph $H$ having the same characteristic polynomial and the same main polynomial as $G$. The proof rests on a simple allocation principle. The Perron root of each connected component of $H$ is main in $H$; its minimal polynomial is therefore one of the
irreducible factors of the common main polynomial and must occur in the characteristic polynomial of that component. The multiplicities of the main factors thus form a finite supply that must be distributed among the components.

For generalized cospectral mates of a tree this has an immediate topological consequence. If $T$ is a tree and $H$ is generalized cospectral with $T$, then $H$ is bipartite and has the same numbers of vertices and edges as $T$. Hence, with \(\beta(H):=|E(H)|-|V(H)|+c(H),\) one has
\[
\beta(H)=c(H)-1
\leq \kappa_{\mathrm m}(T)-1.
\]
In particular, $\kappa_{\mathrm m}(T)=1$ forces every generalized cospectral mate of $T$ to be a tree. For a connected graph,
\[
\kappa_{\mathrm m}(G)=1
\quad\Longleftrightarrow\quad
M_G(x)\ \text{is irreducible over }\mathbb Q.
\]
Thus irreducibility of the main polynomial, rather than of the full characteristic polynomial, already gives a tree-forcing criterion.

The first nontrivial case is $\kappa_{\mathrm m}(T)=2$. We show that any disconnected generalized cospectral mate has the form \(H=R\cup U,\) where $R$ is a tree and $U$ is connected, bipartite, and unicyclic. Moreover, \(M_T(x)=f(x)g(x),\) where $f$ and $g$ are distinct monic polynomials irreducible over $\mathbb Q$, each occurring with exponent one in $\phi_T$, and, after interchanging $f$ and $g$ if necessary, \(M_R=f, M_U=g.\)

The graph coronal now becomes decisive. Recall that
\(
\Gamma_G(x)
=\mathbf1^{\mathsf T}(xI-A(G))^{-1}\mathbf1,
\)
introduced by McLeman and McNicholas \cite{McLemanMcNicholas2011}. Its poles are precisely the main eigenvalues, its reduced denominator is $M_G$, and its Laurent
expansion at infinity records the total walk counts. Since $M_T=fg$, coprimality of $f$ and $g$ yields a unique canonical decomposition \(\Gamma_T=\Gamma_f+\Gamma_g\)
into reduced proper fractions with denominators $f$ and $g$. If $H=R\cup U$ is a disconnected generalized cospectral mate, then necessarily \(\Gamma_R=\Gamma_f, \Gamma_U=\Gamma_g.\) Thus the two summands are not merely formal partial fractions: they must be coronals of actual connected components. The existence of a disconnected mate is thereby reduced to a component-realizability problem.

The Laurent coefficients of the factor coronals give immediate obstructions. If $\Gamma_h$ is realized by a graph $X$, then its Laurent coefficients are the total walk counts of $X$. They must therefore be nonnegative integers and satisfy the low-order identities forced by the order, size, and degree sequence of $X$. In the two-factor case one prospective component is a tree and the other is unicyclic, so their Euler defects impose different moment conditions. These conditions yield effective tree-forcing criteria; in particular, a single nonintegral factor moment may already exclude every disconnected generalized cospectral mate.

The coronal does not, however, determine the entire characteristic polynomial. Matching data supply the missing information. Suppose that an actual disconnected generalized cospectral mate is $H=R\cup U$, and let $C_{2\ell}$ be the unique cycle of $U$. The cycle contribution to the determinant gives an exact correction formula for the matching numbers of $T$ and $H$. In particular, $2\ell\geq6$ and
\[
\nu(T)-\nu(R)\geq \ell-1\geq2,
\]
where $\nu$ denotes the matching number. We also obtain corresponding nullity and maximum-matching restrictions.

The same matching relations work in the reverse direction. Suppose that the two canonical factor coronals are realized by a tree $R$ and a connected bipartite unicyclic graph $U$, and put $H=R\cup U$. If the first $s+1$ matching identities arising from the cycle correction hold and
\[
d:=\deg\frac{\phi_T}{M_T}
\leq 2s+1,
\]
then they force $\phi_T=\phi_H$.

The factor-coronal realization already gives equality of the coronals, and hence $T$ and $H$ are generalized cospectral. Thus the two parts of the method are complementary: factor coronals determine the main spectral information componentwise, while finitely many matching coefficients recover the remaining characteristic-polynomial data.

The resulting mechanism is
\[
\text{main-factor allocation}
\longrightarrow
\text{component-coronal realizability}
\longrightarrow
\text{finite matching certification}.
\]
The first step bounds and allocates the components; the second tests their realizability; the third supplies spectral information not seen by the coronal. The contribution lies in this componentwise use of irreducible main factors, rather than in the spectral objects themselves.

A related but distinct rigidity problem is treated in \cite{Zhu2026}. There the adjacency spectrum together with the total-walk sequence is used to determine graphs within the class of trees, and every tree of matching number at most $4$ is determined in that setting. Here a generalized cospectral mate of a tree need not itself be a tree, so its component structure and the realizability of those components must first be controlled. No result from \cite{Zhu2026} is needed for the double-star application below; the
required rigidity within the class of trees is proved directly in Section~\ref{sec:applications}.

We apply the method to the double stars
\(
T_m:=D(2m,m+1),
 m\geq2,
\)
where $D(a,b)$ is obtained by joining the centers of $K_{1,a}$ and $K_{1,b}$. The nonzero quartic factor of $\phi_{T_m}$ splits as
\[
x^4-(3m+2)x^2+2m(m+1)
=(x^2-m)(x^2-2m-2).
\]
If neither $m$ nor $2m+2$ is a perfect square, the two quadratic factors are irreducible over $\mathbb Q$, and $\kappa_{\mathrm m}(T_m)=2$. The corresponding canonical
factor-coronal decomposition fails a zeroth-moment integrality condition, excluding disconnected generalized cospectral mates. A direct adjacency-spectral argument then determines $T_m$ within the class of trees. Hence $T_m$ is DGS.

Barranca and Barrus showed that $D(2m,m+1)$ is not DS for every $m\geq2$ \cite{BarrancaBarrus2025}. Therefore $D(2m,m+1)$ is DGS but not DS whenever neither $m$ nor $2m+2$ is a perfect square. Taking $m=4t+2$ yields the explicit infinite family \(D(8t+4,4t+3), t\geq0.\)

This application also separates the present method from that of \cite{LinEtAl2026}. For the double-star family considered there, the relevant nonzero quartic characteristic factor is irreducible over $\mathbb Q$, and the proof exploits that irreducibility. For $T_m=D(2m,m+1)$ the corresponding quartic factor is reducible. Its two quadratic irreducible factors are precisely the factors retained by the canonical factor-coronal decomposition. The irreducibility argument of \cite{LinEtAl2026} is therefore unavailable; here the factorization itself is used componentwise to exclude disconnected generalized cospectral mates.

The remainder of the paper is organized as follows. Section~\ref{sec:preliminaries} collects the main spectral and coronal preliminaries. Section~\ref{sec:main-factor-components} introduces the main-factor multiplicity, proves the component bound, and derives its cycle-rank and tree-forcing consequences for generalized cospectral mates of trees. Section~\ref{sec:two-factor} treats the case $\kappa_{\mathrm m}(T)=2$, develops factor-coronal realizability obstructions, derives matching and nullity restrictions for disconnected mates, and establishes a finite matching criterion that certifies generalized cospectrality from component-coronal realizations. Section~\ref{sec:applications} applies the method to double stars and proves the DGS-but-not-DS family above.

\section{Main eigenvalues, coronals, and generalized cospectrality}\label{sec:preliminaries}

Throughout the paper, all graphs are finite, simple, and undirected. For a graph $G$ on $n$ vertices, let $A(G)$ denote its adjacency matrix and let
\(\phi_G(x):=\det(xI-A(G))\) be its characteristic polynomial. We write $\mathbf{1}$ for the all-one vector of the appropriate dimension, and $\mathbf{1}_X$ when the underlying graph $X$ needs to be specified.

For a graph $X$, let $c(X)$ denote the number of its connected components, and define its cycle rank by \(\beta(X):=|E(X)|-|V(X)|+c(X).\) For $k\ge0$, let
\(W_k(X):=\mathbf{1}_X^{T}A(X)^k\mathbf{1}_X\) denote the total number of walks of length $k$ in $X$. For a connected graph $X$, let
\(
\rho(X):=\max\{|\lambda|:\lambda
    \text{ is an adjacency eigenvalue of }X\}
\)
denote the spectral radius of $X$. By the Perron--Frobenius theorem \cite{BrouwerHaemers2012}, $\rho(X)$ is itself an adjacency eigenvalue of $X$, is simple, and has an eigenvector with all entries positive. We refer to $\rho(X)$ as the \emph{Perron root} of $X$. In particular, $\rho(X)$ is the largest adjacency eigenvalue of $X$.

For an adjacency eigenvalue $\lambda$ of $G$, let $E_\lambda$ denote the orthogonal projection onto its eigenspace. We call $\lambda$ a \emph{main eigenvalue} of $G$ if \(E_\lambda\mathbf{1}\ne0.\) In particular, if $G$ is connected, then its Perron root is main, since a positive Perron eigenvector has nonzero inner product with $\mathbf{1}$.
If $\lambda_1,\ldots,\lambda_s$ are the distinct main eigenvalues of $G$, we define the \emph{main polynomial} of $G$ by \[M_G(x):=\prod_{i=1}^{s}(x-\lambda_i).\]
The \emph{coronal} of $G$ is the rational function
\(\Gamma_G(x):=
    \mathbf{1}^{T}(xI-A(G))^{-1}\mathbf{1}.\)

\begin{lemma}[Main polynomial and the coronal]\label{lem:coronal-denominator}
Let $G$ be a graph. Then
\[
\Gamma_G(x)=\sum_{\lambda}
    \frac{\|E_\lambda\mathbf1\|^2}{x-\lambda},
\]
where the sum runs over the distinct adjacency eigenvalues of $G$. The poles of $\Gamma_G$ are precisely the main eigenvalues of $G$, and every pole is simple. If $\Gamma_G$ is written in reduced form with monic denominator, then this denominator is $M_G(x)$. Moreover, $M_G(x)\in\mathbb Z[x]$, $M_G(x)\mid\phi_G(x)$ in $\mathbb Z[x]$.
The set of main eigenvalues is closed under algebraic conjugation over $\mathbb Q$.
\end{lemma}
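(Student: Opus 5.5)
The plan is to start from the spectral decomposition of the real symmetric matrix $A=A(G)$. Write $A=\sum_\lambda \lambda E_\lambda$, where the $E_\lambda$ are the orthogonal eigenprojections with $\sum_\lambda E_\lambda=I$ and $E_\lambda E_\mu=\delta_{\lambda\mu}E_\lambda$. Then $(xI-A)^{-1}=\sum_\lambda (x-\lambda)^{-1}E_\lambda$. Since $E_\lambda$ is symmetric and idempotent, $\mathbf1^{T}E_\lambda\mathbf1=\|E_\lambda\mathbf1\|^2$, which gives the displayed formula. The distinct eigenvalues are distinct real numbers, so the terms with nonzero numerator have distinct simple poles and cannot cancel one another. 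Hence the poles of $\Gamma_G$ are exactly the $\lambda$ with $E_\lambda\mathbf1\neq0$, that is, the main eigenvalues, and each pole is simple.

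Next I would put the sum over a common denominator:
\[
\Gamma_G(x)=\frac{P(x)}{M_G(x)},\qquad P(x)=\sum_{i}\|E_{\lambda_i}\mathbf1\|^2\prod_{j\neq i}(x-\lambda_j).
\]
Evaluating at $x=\lambda_i$ gives $P(\lambda_i)=\|E_{\lambda_i}\mathbf1\|^2\prod_{j\ne i}(\lambda_i-\lambda_j)\neq0$. So $P$ and $M_G$ share no root, and the fraction is reduced with monic denominator $M_G$. Uniqueness of the reduced form of a rational function over $\mathbb C$ then identifies the reduced monic denominator as $M_G$.

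The main step is rationality and integrality. By Cramer's rule, $\Gamma_G=\mathbf1^{T}\operatorname{adj}(xI-A)\mathbf1/\phi_G(x)$, with numerator in $\mathbb Z[x]$, so $\Gamma_G\in\mathbb Q(x)$. The reduced form of an element of $\mathbb Q(x)$ can be computed by a gcd over $\mathbb Q$, and a gcd over $\mathbb Q$ is also the gcd over $\mathbb C$. Therefore the reduced monic denominator $M_G$ lies in $\mathbb Q[x]$ and divides $\phi_G$ in $\mathbb Q[x]$.

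To pass to $\mathbb Z[x]$, I would use Gauss's lemma. $\phi_G$ is monic in $\mathbb Z[x]$, and $M_G$ is a monic factor of it in $\mathbb Q[x]$. Write $\phi_G=M_G Q$ with $Q$ monic in $\mathbb Q[x]$, and choose positive integers $a,b$ so that $aM_G$ and $bQ$ are primitive integer polynomials. Their product $ab\,\phi_G$ is then primitive, and since $\phi_G$ has integer coefficients this forces $ab=1$. Hence $M_G,Q\in\mathbb Z[x]$. An equivalent route is that every root of $M_G$ is an algebraic integer, so the coefficients are rational algebraic integers.

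Finally, because $M_G\in\mathbb Q[x]$, each Galois automorphism permutes its roots. So the set of main eigenvalues is closed under algebraic conjugation. I expect the only point needing care to be the passage from the complex partial fraction to rationality of the denominator, which the gcd argument above handles.
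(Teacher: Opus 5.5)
Your proposal is correct and follows essentially the same route as the paper: spectral decomposition for the partial-fraction formula and the simple poles, the adjugate formula for rationality over $\mathbb Q$, Gauss's lemma for integrality and divisibility in $\mathbb Z[x]$, and rationality of $M_G$ for closure under conjugation. Your explicit check that $P(\lambda_i)\neq0$ and your remark that a reduced form over $\mathbb Q$ stays reduced over $\mathbb C$ spell out points the paper's shorter proof leaves implicit.
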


\begin{proof}
By the spectral decomposition of $A$,
\[
(xI-A)^{-1}=\sum_{\lambda}
    \frac{E_\lambda}{x-\lambda},
\]
where $\lambda$ ranges over the distinct adjacency eigenvalues. Thus
\[
\Gamma_G(x)=\mathbf{1}^{T}(xI-A)^{-1}\mathbf{1}
    =\sum_{\lambda\in\operatorname{Spec}(G)}
\frac{\mathbf{1}^{T}E_\lambda\mathbf{1}}{x-\lambda}
 =\sum_{\lambda\in\operatorname{Spec}(G)}
    \frac{\|E_\lambda\mathbf{1}\|^2}{x-\lambda}.
\]
The residue at $\lambda$ is nonzero exactly when $E_\lambda\mathbf1\neq0$. Hence the poles are precisely the main eigenvalues, and each is simple.

The adjugate formula gives
\[
\Gamma_G(x)=
    \frac{\mathbf1^T\operatorname{adj}(xI-A)\mathbf1}{\phi_G(x)}.
\]
Hence $\Gamma_G\in\mathbb Q(x)$, and its reduced denominator divides $\phi_G$ in $\mathbb Q[x]$. Since its poles are the distinct main eigenvalues and all are simple, its monic reduced denominator is
\(
\prod_{\lambda\ {\rm main}}(x-\lambda)=M_G(x).
\)
Since $\phi_G$ is monic and integral, Gauss' lemma gives $M_G(x)\in\mathbb Z[x]$, $M_G(x)\mid\phi_G(x)$ in $\mathbb Z[x]$. Since $M_G\in\mathbb Q[x]$, the set of its roots, namely the main eigenvalues of $G$, is closed under algebraic conjugation over $\mathbb Q$.
\end{proof}

McLeman and McNicholas~\cite[Theorem~12]{McLemanMcNicholas2011} proved the following complement--coronal identity. We record it in the form used below.

\begin{lemma}[Complement--coronal identity]\label{lem:complement-coronal}
Let $G$ be a graph on $n$ vertices. Then
\[
\phi_{\overline G}(x)=(-1)^n\phi_G(-x-1)
\bigl(1+\Gamma_G(-x-1)\bigr).
\]
\end{lemma}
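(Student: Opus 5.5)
We prove the identity with the matrix determinant lemma applied to the rank-one perturbation relating $A(\overline G)$ to $A(G)$. Write $A=A(G)$ and $J=\mathbf1\mathbf1^{T}$. The adjacency matrix of the complement is
\[
A(\overline G)=J-I-A .
\]
Hence
\[
xI-A(\overline G)=(x+1)I+A-J=-\bigl((-x-1)I-A\bigr)-\mathbf1\mathbf1^{T}.
\]
Put $y=-x-1$ and $B=yI-A$. Then $xI-A(\overline G)=-(B+\mathbf1\mathbf1^{T})$, and taking determinants gives
\[
\phi_{\overline G}(x)=(-1)^n\det\bigl(B+\mathbf1\mathbf1^{T}\bigr).
\]

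Next we evaluate this determinant for $y$ outside the spectrum of $A$, so that $B$ is invertible. The matrix determinant lemma $\det(B+uv^{T})=\det(B)\,(1+v^{T}B^{-1}u)$ then gives
\[
\det\bigl(B+\mathbf1\mathbf1^{T}\bigr)=\det(B)\bigl(1+\mathbf1^{T}B^{-1}\mathbf1\bigr)=\phi_G(y)\bigl(1+\Gamma_G(y)\bigr),
\]
since $\det B=\phi_G(y)$ and $\mathbf1^{T}B^{-1}\mathbf1=\Gamma_G(y)$ by the definition of the coronal. If one prefers not to quote the lemma, it follows from the factorization $B+uv^{T}=B(I+B^{-1}uv^{T})$ together with the fact that $I+wv^{T}$ has eigenvalue $1+v^{T}w$ once and eigenvalue $1$ on the hyperplane $v^{\perp}$. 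Substituting back $y=-x-1$ yields
\[
\phi_{\overline G}(x)=(-1)^n\phi_G(-x-1)\bigl(1+\Gamma_G(-x-1)\bigr)
\]
for all $x$ with $-x-1\notin\operatorname{Spec}(G)$.

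Finally, we remove the restriction on $x$. By the adjugate expression in the proof of Lemma~\ref{lem:coronal-denominator}, $\phi_G(y)\Gamma_G(y)=\mathbf1^{T}\operatorname{adj}(yI-A)\mathbf1$, which is a polynomial in $y$. So the right-hand side is a polynomial in $x$. Both sides are therefore polynomials that agree at all but finitely many points, and hence they are equal identically. The only step needing care is this extension across the poles of $\Gamma_G$; the adjugate form settles it.
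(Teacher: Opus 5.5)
Your proof is correct. The paper gives no argument for this lemma; it quotes it from McLeman and McNicholas (their Theorem~12), so your proposal replaces a citation with a self-contained derivation.

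Your route writes $xI-A(\overline G)=-\bigl(B+\mathbf 1\mathbf 1^{T}\bigr)$ with $B=(-x-1)I-A(G)$ and applies the matrix determinant lemma. This is exactly the rank-one perturbation by the all-one matrix that the introduction attributes to Johnson and Newman. Your extension across the poles uses only the adjugate formula from the proof of Lemma~\ref{lem:coronal-denominator}.

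Besides independence from the reference, your argument gives a precise reading of the statement. With $y=-x-1$, the right-hand side equals the polynomial $(-1)^n\bigl(\phi_G(y)+\mathbf 1^{T}\operatorname{adj}(yI-A(G))\mathbf 1\bigr)$. So the identity holds for every $x$, not only when $-x-1\notin\operatorname{Spec}(G)$. This covers both later uses of the lemma, in Corollary~\ref{cor:gs-coronal} and in the final step of Proposition~\ref{prop:finite-matching-certification}. The citation, for its part, keeps the preliminaries short and credits the source.

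One small precision concerns your optional derivation of the determinant lemma. The eigenvalue description of $I+wv^{T}$ should be read with algebraic multiplicities: when $v^{T}w=0$ the matrix is unipotent and need not be diagonalizable. The conclusion $\det(I+wv^{T})=1+v^{T}w$ is unaffected.
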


\begin{corollary}[Generalized cospectral invariance]\label{cor:gs-coronal}
Let $G$ and $H$ be generalized cospectral graphs. Then \(\Gamma_G(x)=\Gamma_H(x), M_G(x)=M_H(x).\)
\end{corollary}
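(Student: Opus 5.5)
The plan is to read off $\Gamma_H$ from the complement--coronal identity of Lemma~\ref{lem:complement-coronal}, and then take the reduced denominator using Lemma~\ref{lem:coronal-denominator}.

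Generalized cospectrality gives two equalities: $\phi_G=\phi_H$ and $\phi_{\overline G}=\phi_{\overline H}$. Moreover $G$ and $H$ have the same order $n$, since this is the degree of the common characteristic polynomial. Apply Lemma~\ref{lem:complement-coronal} to both graphs:
\[
(-1)^n\phi_G(-x-1)\bigl(1+\Gamma_G(-x-1)\bigr)=\phi_{\overline G}(x)=\phi_{\overline H}(x)=(-1)^n\phi_H(-x-1)\bigl(1+\Gamma_H(-x-1)\bigr).
\]
Because $\phi_G=\phi_H$ is a monic polynomial, $\phi_G(-x-1)$ is a nonzero polynomial. We may therefore cancel the common factor $(-1)^n\phi_G(-x-1)$ in the field $\mathbb Q(x)$, which gives $\Gamma_G(-x-1)=\Gamma_H(-x-1)$ as rational functions. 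The substitution $x\mapsto -x-1$ is an automorphism of $\mathbb Q(x)$, so $\Gamma_G(x)=\Gamma_H(x)$.

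For the main polynomials, Lemma~\ref{lem:coronal-denominator} says that $M_G$ is the monic denominator of $\Gamma_G$ in reduced form. The reduced form of a rational function with monic denominator is unique. Hence $\Gamma_G=\Gamma_H$ forces $M_G=M_H$.

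No step is a real obstacle. The only care needed is that the cancellation takes place in $\mathbb Q(x)$, so we use only that $\phi_G(-x-1)$ is not the zero polynomial. It does not matter that $\phi_G(-x-1)$ vanishes at finitely many points.
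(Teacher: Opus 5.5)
Your proposal is correct and follows the paper's proof: you apply the complement--coronal identity (Lemma~\ref{lem:complement-coronal}) to both graphs to get $\Gamma_G=\Gamma_H$, and then use Lemma~\ref{lem:coronal-denominator} to identify $M_G=M_H$ as the common reduced monic denominator. Your explicit points — cancelling the nonzero factor $(-1)^n\phi_G(-x-1)$ in $\mathbb{Q}(x)$, and noting that $x\mapsto -x-1$ is an automorphism — only spell out steps the paper leaves implicit.
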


\begin{proof}
Let $n$ be the common order of $G$ and $H$. Generalized cospectrality gives \(\phi_G=\phi_H, \phi_{\overline G}=\phi_{\overline H}.\) Lemma~\ref{lem:complement-coronal} then yields \(\Gamma_G(x)=\Gamma_H(x).\) Lemma~\ref{lem:coronal-denominator} identifies the reduced monic denominator of the coronal with the main polynomial, and hence           \(M_G(x)=M_H(x).\)
\end{proof}

\section{Main-factor multiplicities and tree mates}\label{sec:main-factor-components}

\medskip
\noindent\textit{Main-factor allocation and component bound.}
\medskip

Write
\begin{equation}\label{eq:characteristic-irreducible-factorization}
\phi_G(x) =\prod_{i=1}^{r} f_i(x)^{m_i},
\end{equation}
where $f_1,\ldots,f_r$ are distinct monic polynomials irreducible over $\mathbb Q$.

By Lemma~\ref{lem:coronal-denominator}, $M_G$ is a square-free divisor of $\phi_G$. Thus there is a unique set \(I_{\mathrm m}(G)\subseteq\{1,\ldots,r\}\) such that
\(M_G(x)=\prod_{i\in I_{\mathrm m}(G)} f_i(x).\) The \emph{main-factor multiplicity} of $G$ is \(\kappa_{\mathrm m}(G):=\sum_{i\in I_{\mathrm m}(G)} m_i.\)

\begin{theorem}[Main-factor allocation and component bound]\label{thm:main-factor-component}
Let $G$ and $H$ be graphs with \(\phi_H=\phi_G, M_H=M_G,\) where $\phi_G$ is factorized as in \eqref{eq:characteristic-irreducible-factorization}. Let
\( H=H_1\cup\cdots\cup H_{c(H)}\) be the decomposition of $H$ into connected components, and set
\(
a_{ji}:=\operatorname{ord}_{f_i}\bigl(\phi_{H_j}\bigr),
1\le j\le c(H), 1\le i\le r.
\)
Then
\begin{equation}\label{eq:allocation-column-sum}
\sum_{j=1}^{c(H)} a_{ji}=m_i, 1\le i\le r,
\end{equation}
and every component contains at least one main irreducible factor:
\begin{equation}\label{eq:allocation-row}
\sum_{i\in I_{\mathrm m}(G)} a_{ji}\ge1, 1\le j\le c(H).
\end{equation}
Consequently, \(c(H)\le\kappa_{\mathrm m}(G).\)
\end{theorem}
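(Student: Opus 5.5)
The plan is to prove the two displayed relations in turn and then obtain the bound by double counting.

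For \eqref{eq:allocation-column-sum}: the adjacency matrix of $H$ is block diagonal with blocks $A(H_j)$, so $\phi_H=\prod_{j}\phi_{H_j}$. Each $\phi_{H_j}$ is monic in $\mathbb Z[x]$ and hence factors uniquely over $\mathbb Q$ into monic irreducibles. The order $\operatorname{ord}_{f_i}$ is additive over products, so
\[
\sum_j a_{ji}=\operatorname{ord}_{f_i}(\phi_H)=\operatorname{ord}_{f_i}(\phi_G)=m_i .
\]
Here we use the hypothesis $\phi_H=\phi_G$ and the factorization \eqref{eq:characteristic-irreducible-factorization}.

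For \eqref{eq:allocation-row}, fix a component $H_j$ and let $\rho_j=\rho(H_j)$ be its Perron root, with positive Perron eigenvector $v_j$ on $V(H_j)$. Extend $v_j$ by zero to a vector $\tilde v_j$ on $V(H)$. Then $\tilde v_j$ is an eigenvector of $A(H)$ for the eigenvalue $\rho_j$, and $\mathbf1^{\mathsf T}\tilde v_j=\mathbf1^{\mathsf T}v_j>0$.

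This is the one step needing care, since $\rho_j$ may also be an eigenvalue of other components. Still, $\tilde v_j$ lies in the $\rho_j$-eigenspace of $A(H)$, and that eigenspace is not orthogonal to $\mathbf1$. Hence $E_{\rho_j}\mathbf1\neq0$: otherwise $\mathbf1$ would be orthogonal to the whole eigenspace, contradicting $\mathbf1^{\mathsf T}\tilde v_j\neq0$. So $\rho_j$ is a main eigenvalue of $H$, and $M_H(\rho_j)=0$.

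Since $M_H=M_G=\prod_{i\in I_{\mathrm m}(G)}f_i$, the number $\rho_j$ is a root of some $f_i$ with $i\in I_{\mathrm m}(G)$. Because $f_i$ is irreducible over $\mathbb Q$, it is the minimal polynomial of $\rho_j$. As $\rho_j$ is a root of $\phi_{H_j}\in\mathbb Q[x]$, we get $f_i\mid\phi_{H_j}$, so $a_{ji}\ge1$. This proves \eqref{eq:allocation-row}.

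Finally, sum \eqref{eq:allocation-row} over $j$ and exchange the order of summation using \eqref{eq:allocation-column-sum}:
\[
c(H)\le\sum_{j=1}^{c(H)}\sum_{i\in I_{\mathrm m}(G)}a_{ji}=\sum_{i\in I_{\mathrm m}(G)}m_i=\kappa_{\mathrm m}(G).
\]

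The only potential obstacle is the mainness of $\rho_j$ in $H$ rather than merely in $H_j$, and the zero-extension argument above handles it directly. The rest is bookkeeping.
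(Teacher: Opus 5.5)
Your proposal is correct and follows the paper's proof step for step. The steps are: multiplicativity of $\phi$ over components for the column sums; the zero-extended Perron vector showing each $\rho(H_j)$ is main in $H$, so its minimal polynomial is a main factor dividing $\phi_{H_j}$; and double counting for $c(H)\le\kappa_{\mathrm m}(G)$. Your remark that $E_{\rho_j}\mathbf1\neq0$, because $\tilde v_j$ lies in the $\rho_j$-eigenspace, only makes explicit a step the paper states directly.
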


\begin{proof}
The characteristic polynomial is multiplicative over disjoint unions, so \(\phi_H=\prod_{j=1}^{c(H)}\phi_{H_j}.\) Taking the multiplicity of $f_i$ on both sides and using
$\phi_H=\phi_G$ gives \eqref{eq:allocation-column-sum}.

For the row condition, fix $j$ and let $\rho_j:=\rho(H_j)$. Since $H_j$ is connected, it has a positive Perron eigenvector $v_j$. Extend $v_j$ by zero outside $V(H_j)$ to a vector $\widetilde v_j$ on $V(H)$. Then \(A(H)\widetilde v_j=\rho_j\widetilde v_j, \mathbf1^T\widetilde v_j>0.\) Thus $\rho_j$ is a main eigenvalue of $H$.

Because $M_H=M_G$, the eigenvalue $\rho_j$ lies in a factor $f_{i(j)}$ with $i(j)\in I_{\mathrm m}(G)$. Since $\rho_j$ is an eigenvalue of $H_j$, its minimal polynomial $f_{i(j)}$ divides $\phi_{H_j}$. Hence $a_{j,i(j)}\ge1$, which gives \eqref{eq:allocation-row}.

Summing \eqref{eq:allocation-row} over the components and using \eqref{eq:allocation-column-sum},
\[c(H) \le \sum_{j=1}^{c(H)} \sum_{i\in I_{\mathrm m}(G)} a_{ji}=\sum_{i\in I_{\mathrm m}(G)} m_i=\kappa_{\mathrm m}(G).\]
\end{proof}

If $H$ is generalized cospectral with $G$, then Corollary~\ref{cor:gs-coronal} and Theorem~\ref{thm:main-factor-component} give \(c(H)\le \kappa_{\mathrm m}(G).\)
In particular, if $\kappa_{\mathrm m}(G)=1$, then every generalized cospectral mate of $G$ is connected.

\begin{remark}\label{rem:allocation-principle}
Theorem~\ref{thm:main-factor-component} is useful not only through the numerical bound on $c(H)$. Its proof gives an allocation principle: every connected component of $H$ contains the minimal polynomial of its Perron root, and this polynomial is one of the irreducible factors contributing to the main polynomial. Thus the multiplicities of the
main irreducible factors form a finite supply that must be distributed among the components of $H$. The bound $c(H)\leq\kappa_{\mathrm m}(G)$ is the immediate numerical consequence of this allocation.
\end{remark}

\medskip
\noindent\textit{Generalized cospectral mates of trees.}
\medskip

For a graph $X$, let \(\beta(X):=|E(X)|-|V(X)|+c(X)\) denote its cycle rank. If \(X=X_1\cup\cdots\cup X_{c(X)}\) is its decomposition into connected components, then
\(\beta(X)=\sum_{j=1}^{c(X)}\beta(X_j),\) and $\beta(X_j)=0$ exactly when $X_j$ is a tree.
 
\begin{proposition}[Structure and cycle rank of tree mates]\label{prop:tree-mate-cycle-rank}
Let $T$ be a tree on $n$ vertices, and let $H$ be generalized cospectral with $T$. Then \(|V(H)|=n, |E(H)|=n-1,\) and $H$ is bipartite. Moreover,
\begin{equation}\label{eq:tree-mate-cycle-rank}
\beta(H)=c(H)-1 \le
    \kappa_{\mathrm m}(T)-1.
\end{equation}
If $q(H)$ denotes the number of connected components of $H$ that contain a cycle, then
\begin{equation}\label{eq:cyclic-component-bound}
q(H)\le c(H)-1 \le \kappa_{\mathrm m}(T)-1.
\end{equation}
In particular, $H$ has at least one tree component. If $\kappa_{\mathrm m}(T)=1$, then $H$ is connected and hence is a tree.
\end{proposition}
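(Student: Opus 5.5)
The plan is to read off the basic invariants from the common characteristic polynomial, then combine the Euler-type count with Theorem~\ref{thm:main-factor-component}.

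\emph{Order, size and bipartiteness.} Since $\phi_H=\phi_T$, the degrees agree, so $|V(H)|=n$. The coefficient of $x^{n-2}$ in $\phi_G$ equals $-|E(G)|$, which gives $|E(H)|=|E(T)|=n-1$. For bipartiteness I would use that a tree is bipartite, so its spectrum is symmetric about $0$. Hence
\[
\operatorname{tr}A(H)^{k}=\operatorname{tr}A(T)^{k}=0 \quad\text{for every odd } k.
\]
Because $\operatorname{tr}A(H)^k$ counts closed walks of length $k$, $H$ has no closed walk of odd length. Therefore $H$ has no odd cycle, and so $H$ is bipartite.

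\emph{Cycle rank and the component bound.} With the counts above,
\[
\beta(H)=(n-1)-n+c(H)=c(H)-1 .
\]
By Corollary~\ref{cor:gs-coronal}, $M_H=M_T$. Together with $\phi_H=\phi_T$, Theorem~\ref{thm:main-factor-component} applies and gives $c(H)\le\kappa_{\mathrm m}(T)$. This yields \eqref{eq:tree-mate-cycle-rank}.

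\emph{Cyclic components.} Decompose $H$ into components $H_j$, so that $\beta(H)=\sum_j\beta(H_j)$. Each $\beta(H_j)$ is a nonnegative integer, and it is at least $1$ exactly when $H_j$ contains a cycle. Hence $q(H)\le\beta(H)=c(H)-1$, which gives \eqref{eq:cyclic-component-bound}. Since $q(H)\le c(H)-1<c(H)$, at least one component contains no cycle, so $H$ has a tree component.

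\emph{The case $\kappa_{\mathrm m}(T)=1$.} Here $c(H)=1$, so $H$ is connected. Then $\beta(H)=0$, and $H$ is a tree.

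\emph{Main obstacle.} There is no real obstacle; the only nontrivial input is Theorem~\ref{thm:main-factor-component}, via Corollary~\ref{cor:gs-coronal}. The one point needing care is bipartiteness. It must be derived from the symmetric spectrum through odd closed-walk counts, not from the generalized spectrum directly.
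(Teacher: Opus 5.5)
Your proposal is correct and follows essentially the same route as the paper. You get the order and size from cospectrality, reading $|E(H)|$ off the coefficient of $x^{n-2}$ rather than from $\operatorname{tr}A^2=2|E|$, which is an immaterial difference; bipartiteness from vanishing odd traces; $c(H)\le\kappa_{\mathrm m}(T)$ via Corollary~\ref{cor:gs-coronal} and Theorem~\ref{thm:main-factor-component}; and $q(H)\le\beta(H)=c(H)-1$ from additivity of the cycle rank.
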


\begin{proof}
Adjacency cospectrality gives $|V(H)|=|V(T)|=n$. Since \(\operatorname{tr}A(X)^2=2|E(X)|\) for every simple graph $X$, it also gives \(|E(H)|=|E(T)|=n-1.\)

Since $T$ is bipartite,
\(
\operatorname{tr}A(T)^{2k+1}=0,
     k\ge0.
\)
The same odd spectral moments vanish for $H$. An odd cycle of length $2k+1$ in $H$ would give \(\operatorname{tr}A(H)^{2k+1}>0,\) a contradiction. Hence $H$ is bipartite.

By the definition of cycle rank, \(\beta(H)=|E(H)|-|V(H)|+c(H)=c(H)-1.\) Corollary~\ref{cor:gs-coronal} and Theorem~\ref{thm:main-factor-component} give \(c(H)\le\kappa_{\mathrm m}(T),\) and hence
\(
\beta(H)=c(H)-1
    \le
\kappa_{\mathrm m}(T)-1.
\)

Write \(H=H_1\cup\cdots\cup H_{c(H)}.\) Every component containing a cycle has cycle rank at least one, so
\[
q(H) \le \sum_{j=1}^{c(H)}\beta(H_j)=\beta(H)=c(H)-1.
\]
Together with $c(H)\le\kappa_{\mathrm m}(T)$, this gives \eqref{eq:cyclic-component-bound}. Hence at least one component of $H$ is a tree.

If $\kappa_{\mathrm m}(T)=1$, then $c(H)=1$. Hence $\beta(H)=0$, so $H$ is a tree.
\end{proof}

\begin{lemma}[Perron factor and the one-factor case]\label{lem:perron-main-factor}
Let $G$ be a connected graph, set $\rho=\rho(G)$, and let $p_\rho(x)$ be the minimal polynomial of $\rho$ over $\mathbb Q$. Then \(\operatorname{ord}_{p_\rho}\phi_G=1.\)
Moreover,
\(\kappa_{\mathrm m}(G)=1 \Longleftrightarrow  M_G(x)\ \text{is irreducible over }\mathbb Q.\)
\end{lemma}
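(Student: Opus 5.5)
The plan has two parts: first show that the minimal polynomial $p_\rho$ of the Perron root occurs exactly once in $\phi_G$, then use this for the equivalence.

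\emph{Simplicity of $p_\rho$.} By Perron--Frobenius, $\rho$ is a simple eigenvalue of the connected graph $G$, so $(x-\rho)$ divides $\phi_G$ exactly once. Since $\phi_G\in\mathbb Z[x]$, its factorization \eqref{eq:characteristic-irreducible-factorization} over $\mathbb Q$ has the form $\phi_G=\prod f_i^{m_i}$. The factor $p_\rho$ is one of the $f_i$, and because $f_i$ is irreducible over $\mathbb Q$ it is separable, with $\rho$ a simple root of $f_i$. Hence the multiplicity of $\rho$ as a root of $\phi_G$ equals the multiplicity $m_i$ of $p_\rho$. Simplicity of $\rho$ then forces $m_i=1$, that is, $\operatorname{ord}_{p_\rho}\phi_G=1$.

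\emph{The equivalence.} Suppose $M_G$ is irreducible over $\mathbb Q$. The Perron root is main, so $p_\rho$ divides $M_G$. Irreducibility together with monicity gives $M_G=p_\rho$. Therefore $I_{\mathrm m}(G)$ consists of the single index of $p_\rho$, and $\kappa_{\mathrm m}(G)=\operatorname{ord}_{p_\rho}\phi_G=1$. Conversely, suppose $\kappa_{\mathrm m}(G)=1$. By Lemma~\ref{lem:coronal-denominator}, $M_G$ is a product of some of the distinct $f_i$, and it is nonempty because $\rho$ is a root of it. Every multiplicity $m_i$ is at least $1$, so the sum defining $\kappa_{\mathrm m}(G)$ is at least the number of irreducible factors of $M_G$. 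If this sum equals $1$, then $M_G$ is a single irreducible factor, so $M_G$ is irreducible.

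The only point needing care is the separability step: over a field of characteristic zero, the multiplicity of a root of $\phi_G$ equals the exponent of its minimal polynomial in $\phi_G$. The rest is bookkeeping. Note that the converse direction does not need connectedness beyond guaranteeing $M_G\neq 1$, and connectedness is exactly what makes $\rho$ main and simple.
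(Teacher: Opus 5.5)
Your proof is correct and follows essentially the same route as the paper: simplicity of the Perron root bounds the exponent of $p_\rho$, the fact that the Perron root is main gives $p_\rho\mid M_G$, and the equivalence is the same bookkeeping with positive multiplicities. The separability step is more than you need, since $p_\rho^2\mid\phi_G$ would already give $(x-\rho)^2\mid\phi_G$, which is how the paper argues.
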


\begin{proof}
The Perron root $\rho$ is simple and has a positive eigenvector, so it is main. Thus $p_\rho\mid M_G$ and $p_\rho\mid\phi_G$. If $p_\rho^2$ divided $\phi_G$, then $\rho$ would be a multiple root of $\phi_G$, contrary to the simplicity of the Perron eigenvalue. Hence \(\operatorname{ord}_{p_\rho}\phi_G=1.\)

If $\kappa_{\mathrm m}(G)=1$, then \(\sum_{i\in I_{\mathrm m}(G)}m_i=1.\) Thus $I_{\mathrm m}(G)$ consists of a single index, and $M_G$ is irreducible.

Conversely, if $M_G$ is irreducible, then $p_\rho\mid M_G$ gives $M_G=p_\rho$. By \(\operatorname{ord}_{p_\rho}\phi_G=1\), this factor occurs with exponent one in $\phi_G$, so
\(\kappa_{\mathrm m}(G)=1.\)
\end{proof}

\begin{corollary}[Tree forcing by an irreducible main polynomial]\label{cor:irreducible-main-tree-forcing}
Let $T$ be a tree. If $M_T(x)$ is irreducible over $\mathbb Q$, then every graph generalized cospectral with $T$ is a tree.
\end{corollary}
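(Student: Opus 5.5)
Since $T$ is a tree, it is connected, so the second part of Lemma~\ref{lem:perron-main-factor} applies with $G=T$: irreducibility of $M_T$ over $\mathbb Q$ is equivalent to $\kappa_{\mathrm m}(T)=1$. The plan is therefore to first convert the hypothesis into $\kappa_{\mathrm m}(T)=1$, and then invoke the one-factor case of Proposition~\ref{prop:tree-mate-cycle-rank}.

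Concretely, the steps are as follows. First, since $M_T$ is irreducible and $T$ is connected, Lemma~\ref{lem:perron-main-factor} gives $M_T=p_\rho$ with $\rho=\rho(T)$, and this factor has exponent one in $\phi_T$. Hence $I_{\mathrm m}(T)$ is a singleton with multiplicity $1$, so $\kappa_{\mathrm m}(T)=1$.

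Second, let $H$ be generalized cospectral with $T$. By Corollary~\ref{cor:gs-coronal}, $M_H=M_T$ and $\phi_H=\phi_T$, so Theorem~\ref{thm:main-factor-component} gives $c(H)\le 1$, hence $H$ is connected. Proposition~\ref{prop:tree-mate-cycle-rank} gives $|E(H)|=n-1$ and $|V(H)|=n$, so $\beta(H)=c(H)-1=0$ and $H$ is a tree. Alternatively, one can quote the last sentence of Proposition~\ref{prop:tree-mate-cycle-rank} directly.

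There is no real obstacle. The only point needing care is that irreducibility of $M_T$ alone does not immediately give $\kappa_{\mathrm m}=1$: one must also know that the unique main factor occurs with exponent one in $\phi_T$. This is exactly where simplicity of the Perron root, via Lemma~\ref{lem:perron-main-factor}, is used, together with connectivity of $T$.
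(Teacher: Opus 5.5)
Your proposal is correct and matches the paper's proof: Lemma~\ref{lem:perron-main-factor} converts irreducibility of $M_T$ into $\kappa_{\mathrm m}(T)=1$, and Proposition~\ref{prop:tree-mate-cycle-rank} finishes. One small attribution fix: $\phi_H=\phi_T$ comes directly from the definition of generalized cospectrality, while Corollary~\ref{cor:gs-coronal} supplies $M_H=M_T$.
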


\begin{proof}
Lemma~\ref{lem:perron-main-factor} gives $\kappa_{\mathrm m}(T)=1$, and Proposition~\ref{prop:tree-mate-cycle-rank} gives the claim.
\end{proof}

\section{The two-factor case}\label{sec:two-factor}
Throughout this section, let $T$ be a tree with $\kappa_{\mathrm m}(T)=2$. If $H$ is generalized cospectral with $T$, then Proposition~\ref{prop:tree-mate-cycle-rank} gives \(c(H)\leq 2, \beta(H)=c(H)-1,\) and $H$ is bipartite. If $c(H)=1$, then $\beta(H)=0$, so $H$ is a tree. Otherwise, \(c(H)=2, \beta(H)=1.\)

\medskip
\noindent\textit{The two-factor structure.}
\medskip

By Lemma~\ref{lem:perron-main-factor}, the minimal polynomial of $\rho(T)$ is a main irreducible factor of $\phi_T$ occurring with multiplicity one. Since $\kappa_{\mathrm m}(T)=2$, there is exactly one further main irreducible factor, also with multiplicity one. Hence \(M_T(x)=f(x)g(x),\) where $f$ and $g$ are distinct monic polynomials irreducible over $\mathbb Q$, each occurring with exponent one in $\phi_T$.

\begin{theorem}[Two-factor structure and allocation]\label{thm:two-factor-structure}
Let $T$ be a tree with $\kappa_{\mathrm m}(T)=2$. Then \(M_T(x)=f(x)g(x),\) where $f,g\in\mathbb Z[x]$ are distinct monic polynomials irreducible over $\mathbb Q$, and
\(\operatorname{ord}_{f}\phi_T =\operatorname{ord}_{g}\phi_T=1.\) If $H$ is generalized cospectral with $T$, then either $H$ is a tree, or \(H=R\cup U,\) where $R$ is a tree and $U$ is a connected bipartite unicyclic graph. In the latter case, after interchanging $f$ and $g$ if necessary, \(M_R(x)=f(x), M_U(x)=g(x).\)
\end{theorem}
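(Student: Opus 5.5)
The plan is to combine Lemma~\ref{lem:perron-main-factor}, Proposition~\ref{prop:tree-mate-cycle-rank}, and the allocation of Theorem~\ref{thm:main-factor-component}.

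\emph{Factorization of $M_T$.} Let $p$ be the minimal polynomial of $\rho(T)$. By Lemma~\ref{lem:perron-main-factor}, $p\mid M_T$ and $\operatorname{ord}_p\phi_T=1$. Since $\kappa_{\mathrm m}(T)=2$, the remaining main factors contribute total multiplicity exactly $1$. Hence there is exactly one further main irreducible factor $g\neq p$, and it has exponent one. Put $f:=p$. Then $M_T=fg$ with $f,g$ irreducible, distinct and monic. Integrality follows from Lemma~\ref{lem:coronal-denominator} and Gauss' lemma: monic factors of a monic integer polynomial lie in $\mathbb Z[x]$.

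\emph{Structure of $H$.} Let $H$ be generalized cospectral with $T$. By Corollary~\ref{cor:gs-coronal} we have $\phi_H=\phi_T$ and $M_H=M_T$. By Proposition~\ref{prop:tree-mate-cycle-rank}, $H$ is bipartite, $c(H)\le 2$ and $\beta(H)=c(H)-1$.
\begin{itemize}
\item If $c(H)=1$, then $\beta(H)=0$ and $H$ is a tree.
\item If $c(H)=2$, write $H=H_1\cup H_2$. Then $\beta(H_1)+\beta(H_2)=1$, so one component has cycle rank $0$ (call it $R$, a tree) and the other has cycle rank $1$ (call it $U$). Thus $U$ is connected and unicyclic, and it is bipartite as a subgraph of $H$.
\end{itemize}

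\emph{Allocation of the main factors.} Apply Theorem~\ref{thm:main-factor-component} with $I_{\mathrm m}=\{f,g\}$, both of multiplicity one. The column sums \eqref{eq:allocation-column-sum} give $a_{R,f}+a_{U,f}=1$ and $a_{R,g}+a_{U,g}=1$. The row conditions \eqref{eq:allocation-row} require each of $R$ and $U$ to contain at least one of $f,g$. So the two factors go to different components. After swapping names, $f\mid\phi_R$, $g\mid\phi_U$, $g\nmid\phi_R$ and $f\nmid\phi_U$.

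The main obstacle is upgrading this to $M_R=f$ and $M_U=g$: a component could have main eigenvalues other than its Perron root. The approach is to compare main polynomials via the coronal.
\begin{itemize}
\item The coronal is additive over components, $\Gamma_H=\Gamma_R+\Gamma_U$, and by Lemma~\ref{lem:coronal-denominator} each poles set is the main eigenvalues of that component.
\item Every main eigenvalue of $R$ is a root of $M_R\mid\phi_R$. It cannot also be a root of $\phi_U$ with cancelling residue, because residues $\|E_\lambda\mathbf1\|^2$ are positive and add. Hence the main eigenvalues of $H$ are exactly the union of those of $R$ and $U$.
\item Therefore $M_R$ and $M_U$ both divide $M_H=fg$, and both are nonconstant since each contains its Perron factor.
\item Since $g\nmid\phi_R$ and $M_R\mid\phi_R$, we get $g\nmid M_R$, so $M_R=f$. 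Symmetrically $M_U=g$.
\end{itemize}
This also shows that the Perron root of $U$ has minimal polynomial $g$.
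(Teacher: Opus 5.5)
Your proof is correct and follows essentially the same route as the paper. You use the Perron-factor count to get $M_T=fg$ with both exponents equal to one, the cycle-rank count from Proposition~\ref{prop:tree-mate-cycle-rank} to get $H=R\cup U$, the allocation of Theorem~\ref{thm:main-factor-component} to separate $f$ and $g$, and finally $M_R,M_U\mid M_H=fg$. The one cosmetic difference is that you show main eigenvalues of a component stay main in $H$ via nonnegativity of the coronal residues $\|E_\lambda\mathbf1\|^2$, whereas the paper extends an eigenvector by zero; both arguments are valid.
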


\begin{proof}
Since $\kappa_{\mathrm m}(T)=2$, Lemma~\ref{lem:perron-main-factor} implies that $M_T$ is reducible. Hence $M_T$ has at least two irreducible factors. On the other hand,
\(\sum_{i\in I_{\mathrm m}(T)}m_i=2,\) and each $m_i$ is positive. Thus there are exactly two main irreducible factors, each occurring with multiplicity one in
$\phi_T$. Thus $M_T=fg$, and both factors occur with multiplicity one in $\phi_T$.

Let $H$ be generalized cospectral with $T$. Proposition~\ref{prop:tree-mate-cycle-rank} gives
\(
c(H)\le2, 
\beta(H)=c(H)-1,
\)
and $H$ is bipartite. If $c(H)=1$, then $H$ is a tree.

Suppose that $c(H)=2$. Then $\beta(H)=1$. By additivity of the cycle rank, one component has cycle rank zero and the other has cycle rank one. After relabeling, $H=R\cup U$,
where $R$ is a tree and $U$ is connected and unicyclic. Since $H$ is bipartite, so is $U$.

By Theorem~\ref{thm:main-factor-component}, each of $R$ and $U$ contains at least one of the two main factors $f$ and $g$. Since both factors occur with multiplicity one in $\phi_T$, there are only two main-factor copies altogether. Hence each component contains exactly one of them. After interchanging $f$ and $g$ if necessary,
\(\operatorname{ord}_f\phi_R=1, \operatorname{ord}_g\phi_U=1,\) while \(\operatorname{ord}_g\phi_R=\operatorname{ord}_f\phi_U=0.\)

A main eigenvalue of a component remains main in the disjoint union after extending a corresponding eigenvector by zero. Since \(M_H=M_T=fg,\) every irreducible factor of $M_R$ or $M_U$ is one of $f$ and $g$. Both $R$ and $U$ are connected, so each has a nonconstant main polynomial. The allocation above leaves only $f$ as a possible main factor of $R$ and only $g$ as a possible main factor of $U$. Hence \( M_R=f, M_U=g.\)
\end{proof}

\medskip
\noindent\textit{The factor-coronal decomposition.}
\medskip

By Lemma~\ref{lem:coronal-denominator}, the reduced denominator of $\Gamma_T$ is $M_T=fg$. Write
\[
\Gamma_T(x)=
\frac{P(x)}{f(x)g(x)},
\]
where
\(
P\in\mathbb Q[x],
\deg P<\deg f+\deg g,
\gcd(P,fg)=1.
\)

\begin{theorem}[Canonical factor-coronal decomposition]\label{thm:factor-coronal-decomposition}
There exist unique polynomials $A,B\in\mathbb Q[x]$ such that \(P=Ag+Bf, \deg A<\deg f, \deg B<\deg g.\) Define 
\(\Gamma_f(x):=\frac{A(x)}{f(x)}, \Gamma_g(x):=\frac{B(x)}{g(x)}.\)
Then $\Gamma_T=\Gamma_f+\Gamma_g$, and both fractions are reduced. If $H=R\cup U$ is a disconnected generalized cospectral mate of $T$ with \(M_R=f, M_U=g,\) then \(\Gamma_R=\Gamma_f, \Gamma_U=\Gamma_g.\)
\end{theorem}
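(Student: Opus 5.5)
The plan has three steps: existence and uniqueness of $A,B$ from the Chinese remainder theorem, reducedness from the irreducibility of $f$ and $g$, and the identification with $\Gamma_R,\Gamma_U$ from uniqueness of partial fractions.

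\emph{Existence and uniqueness.} Since $f$ and $g$ are distinct monic irreducibles over $\mathbb Q$, they are coprime. Hence there are $u,v\in\mathbb Q[x]$ with $uf+vg=1$.

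Multiplying by $P$ gives $P=(Pv)g+(Pu)f$. Reduce $Pv$ modulo $f$: write $Pv=qf+A$ with $\deg A<\deg f$, and put $B:=Pu+qg$. Then $P=Ag+Bf$.

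To see $\deg B<\deg g$, compare degrees. We have $Bf=P-Ag$, with $\deg P<\deg f+\deg g$ and $\deg(Ag)<\deg f+\deg g$. So $\deg(Bf)<\deg f+\deg g$, which gives $\deg B<\deg g$.

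For uniqueness, suppose $Ag+Bf=A'g+B'f$ with the same degree bounds. Then $f\mid (A-A')g$, and coprimality gives $f\mid A-A'$. Since $\deg(A-A')<\deg f$, we get $A=A'$, and then $B=B'$.

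Dividing $P=Ag+Bf$ by $fg$ gives $\Gamma_T=\Gamma_f+\Gamma_g$.

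\emph{Reducedness.} Since $f$ is irreducible, $A/f$ fails to be reduced only if $f\mid A$. With $\deg A<\deg f$ this forces $A=0$. But then $P=Bf$, so $f\mid\gcd(P,fg)=1$, a contradiction. The same argument applies to $B/g$.

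\emph{Identification with the components.} Let $H=R\cup U$ with $M_R=f$ and $M_U=g$. Since $\mathbf 1_H$ splits across the components and $A(H)$ is block diagonal, the coronal is additive: $\Gamma_H=\Gamma_R+\Gamma_U$. By Corollary~\ref{cor:gs-coronal}, $\Gamma_H=\Gamma_T$.

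By Lemma~\ref{lem:coronal-denominator}, $\Gamma_R$ is a proper rational function whose reduced monic denominator is $M_R=f$. Properness holds because each term $\|E_\lambda\mathbf 1\|^2/(x-\lambda)$ vanishes at infinity. So $\Gamma_R=A_R/f$ with $\deg A_R<\deg f$, and likewise $\Gamma_U=B_U/g$ with $\deg B_U<\deg g$.

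Then $\Gamma_T=(A_Rg+B_Uf)/(fg)$. The reduced form of $\Gamma_T$ is $P/(fg)$, so comparing numerators over the common denominator $fg$ gives $A_Rg+B_Uf=P$. This numerator comparison needs no reducedness of the sum, since both expressions share the denominator $fg$ exactly. The uniqueness established above then yields $A_R=A$ and $B_U=B$.

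The only point needing care is that $A_R$ and $B_U$ have rational coefficients. This holds because $\Gamma_R\in\mathbb Q(x)$, by the adjugate formula for $R$ and $U$ separately. Beyond that there is no real obstacle; the argument is the standard uniqueness of partial fractions over $\mathbb Q$.
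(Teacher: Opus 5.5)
Your proposal is correct and follows essentially the same route as the paper: unique proper partial fractions for the coprime factors $f,g$, reducedness, then $\Gamma_H=\Gamma_R+\Gamma_U$ together with Corollary~\ref{cor:gs-coronal}, Lemma~\ref{lem:coronal-denominator}, and uniqueness to conclude $A_R=A$ and $B_U=B$. The differences are minor. You prove existence and uniqueness explicitly via B\'ezout and a degree count, whereas the paper simply cites uniqueness of partial fractions. For reducedness you use irreducibility of $f$ to force $A=0$, whereas the paper observes that any common divisor of $A$ and $f$ divides $P=Ag+Bf$, which contradicts $\gcd(P,f)=1$.
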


\begin{proof}
Since $f$ and $g$ are coprime, $\frac{P}{fg}$ has a unique proper partial-fraction decomposition
\[
\frac{P}{fg}=\frac{A}{f}+\frac{B}{g},
\]
with \(\deg A<\deg f, \deg B<\deg g.\) Equivalently, $P=Ag+Bf$.

Since $\gcd(P,fg)=1$, \(\gcd(P,f)=\gcd(P,g)=1.\) From $P=Ag+Bf$ and $\gcd(f,g)=1$, reduction modulo $f$ and modulo $g$ gives \(\gcd(A,f)=1, \gcd(B,g)=1.\) Thus $\Gamma_f$ and $\Gamma_g$ are reduced.

Let $H=R\cup U$ be as in the statement. Since $xI-A(H)$ is block diagonal with blocks $xI-A(R)$ and $xI-A(U)$, \(\Gamma_H=\Gamma_R+\Gamma_U.\)
Corollary~\ref{cor:gs-coronal} gives \(\Gamma_H=\Gamma_T.\) Since $M_R=f$ and $M_U=g$, Lemma~\ref{lem:coronal-denominator} gives reduced proper forms
\(\Gamma_R=\frac{A_R}{f}, \Gamma_U=\frac{B_U}{g},\) with \(\deg A_R<\deg f, \deg B_U<\deg g.\)
Hence
\(
\frac{A_R}{f}+\frac{B_U}{g}=\Gamma_T=\frac{A}{f}+\frac{B}{g}.
\)
Uniqueness of the proper partial-fraction decomposition gives \(A_R=A, B_U=B.\) Therefore
\(
\Gamma_R=\Gamma_f, \Gamma_U=\Gamma_g.
\)
\end{proof}

If a disconnected generalized cospectral mate exists, $\Gamma_f$ and $\Gamma_g$ must be realizable as coronals of connected graphs.

\subsection{Factor-moment realizability obstructions}\label{subsec:factor-moments}
For $h\in\{f,g\}$, write the Laurent expansion at infinity as
\begin{equation}\label{eq:factor-moment-expansion}
\Gamma_h(x)=\sum_{k\ge0} w_k^{(h)}x^{-k-1}.
\end{equation}
The coefficient $w_k^{(h)}$ is called the $k$th \emph{factor moment} associated with $h$.

If $\Gamma_h=\Gamma_X$ for a graph $X$, expansion at infinity gives
\[
\Gamma_h(x)=\Gamma_X(x)=\sum_{k\ge0}
    \mathbf1^TA(X)^k\mathbf1\,x^{-k-1}.
\]
Hence
\(
w_k^{(h)}=\mathbf1^TA(X)^k\mathbf1=W_k(X),
 k\ge0.
\)
In particular, \(w_k^{(h)}\in\mathbb Z_{\ge0}, k\ge0,\) and
\begin{equation}\label{eq:first-three-walk-moments}
w_0^{(h)}=|V(X)|,\quad
w_1^{(h)}=2|E(X)|,\quad
w_2^{(h)}=\sum_{v\in V(X)}d_X(v)^2.
\end{equation}

Let \(h(x)=x^d+c_{d-1}x^{d-1}+\cdots+c_1x+c_0.\) Since $h(x)\Gamma_h(x)$ is a polynomial, comparison of the coefficient of $x^{-k-1}$ gives
\begin{equation}\label{eq:factor-moment-recurrence}
w_{k+d}^{(h)}+c_{d-1}w_{k+d-1}^{(h)}+\cdots+c_1w_{k+1}^{(h)}+c_0w_k^{(h)}=0,
\quad k\ge0.
\end{equation}
Since $h\in\mathbb Z[x]$, recurrence \eqref{eq:factor-moment-recurrence} determines every later term integrally from the preceding $d$ terms. Hence the factor-moment
sequence is integral if and only if
\(
w_0^{(h)},\ldots,w_{d-1}^{(h)}\in\mathbb Z.
\)
The recurrence alone does not reduce nonnegativity to a finite check.

\begin{theorem}[Factor-moment realizability obstruction]\label{thm:unified-factor-coronal-obstruction}
Let $T$ be a tree with $\kappa_{\mathrm m}(T)=2$ and $M_T=fg$. Let the factor moments be defined by \eqref{eq:factor-moment-expansion}. If $T$ has a disconnected
generalized cospectral mate, then there is an ordering \((p,q)\in\{(f,g),(g,f)\}\) such that $p$ corresponds to the tree component and $q$ to the connected bipartite unicyclic component. Set \(n_p:=w_0^{(p)}, n_q:=w_0^{(q)}.\) Then:

\medskip
\noindent
{\rm (i)}
\(w_k^{(p)},\,w_k^{(q)}\in\mathbb Z_{\ge0}, k\ge0,\) with \(n_p\ge1, n_q\ge4.\)

\medskip
\noindent
{\rm (ii)}
For \(\delta_h:=w_1^{(h)}-2w_0^{(h)},\) one has \(\delta_p=-2, \delta_q=0.\)

\medskip
\noindent
{\rm (iii)}
The second moments are even and satisfy \(w_2^{(p)} \ge \max\{0,4n_p-6\},  w_2^{(q)} \ge 4n_q.\) If $n_p\ge2$, equality in \(w_2^{(p)}\ge4n_p-6\) holds exactly when the tree component is a path. Equality in \(w_2^{(q)}\ge4n_q\) holds exactly when the unicyclic component is a cycle.

\medskip
\noindent
{\rm (iv)}
If $p(x)=x-a$, then \( (a,n_p)\in\{(0,1),(1,2)\},\) and the corresponding tree is $K_1$ or $K_2$, respectively. If $q(x)=x-a$, then $a=2$ and the corresponding component is
the even cycle $C_{n_q}$.
\end{theorem}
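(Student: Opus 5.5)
By Theorem~\ref{thm:two-factor-structure}, a disconnected mate has the form $H=R\cup U$, where $R$ is a tree and $U$ is a connected bipartite unicyclic graph, and $M_R$, $M_U$ are $f$ and $g$ in some order. Let $p$ be the factor attached to $R$ and $q$ the factor attached to $U$. Theorem~\ref{thm:factor-coronal-decomposition} then gives $\Gamma_R=\Gamma_p$ and $\Gamma_U=\Gamma_q$. The Laurent comparison preceding the statement identifies every factor moment with a total walk count:
\[
w_k^{(p)}=W_k(R),\qquad w_k^{(q)}=W_k(U).
\]
So the plan is to read each item off elementary walk counts of a tree and of a connected bipartite unicyclic graph.

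For (i), nonnegativity and integrality are immediate from the identification with walk counts. Since $n_p=|V(R)|$, we get $n_p\ge1$. A bipartite unicyclic graph contains an even cycle, which has at least four vertices, so $n_q\ge4$. For (ii), use \eqref{eq:first-three-walk-moments}: $w_1=2|E|$ and $w_0=|V|$. A tree has $|E|=|V|-1$, so $\delta_p=-2$. A unicyclic graph has $|E|=|V|$, so $\delta_q=0$.

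For (iii), write $w_2=\sum_v d(v)^2$. This is even because $d^2\equiv d \pmod 2$ and $\sum_v d(v)=2|E|$. For the lower bounds, apply the Cauchy--Schwarz inequality in shifted form. Put $d(v)=2+e_v$. Then
\[
\sum_v d(v)^2=4\sum_v d(v)-4n+\sum_v e_v^2 .
\]
For the unicyclic component, $\sum_v d(v)=2n$, so $w_2^{(q)}=4n_q+\sum_v e_v^2\ge 4n_q$. Equality holds exactly when all degrees are $2$, that is, when $U$ is a cycle.

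For a tree with $n\ge2$, $\sum_v d(v)=2n-2$, so $\sum_v d(v)^2=4n-8+\sum_v e_v^2$. Here $\sum_v e_v=-2$. Each leaf contributes $e_v=-1$, so $\sum_v e_v^2\ge(\#\text{leaves})+\sum_{d\ge3}e_v^2$. The key relation is $\#\text{leaves}=2+\sum_{d\ge3}(d(v)-2)$. It gives $\sum_v e_v^2\ge2$, with equality exactly when there are two leaves and no vertex of degree at least $3$, that is, when the tree is a path. This yields $w_2^{(p)}\ge4n_p-6$ with the stated equality case, and $w_2^{(p)}\ge0$ is trivial (it covers $n_p=1$).

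For (iv), suppose a component $X$ has $M_X=x-a$ linear. Then $\mathbf1$ lies in a single eigenspace, so $A(X)\mathbf1=a\mathbf1$ and $X$ is $a$-regular. A regular tree is $K_1$ (with $a=0$) or $K_2$ (with $a=1$). A connected regular unicyclic graph has $|E|=|V|$, which forces $a=2$, so $X$ is a cycle. Bipartiteness makes it an even cycle, namely $C_{n_q}$.

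The only step needing care is the equality analysis for the tree bound in (iii); the leaf-count identity above handles it. Everything else is bookkeeping on top of Theorems~\ref{thm:two-factor-structure} and~\ref{thm:factor-coronal-decomposition}.
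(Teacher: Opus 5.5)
Your proof is correct and follows the paper's argument essentially step by step. You identify $w_k^{(p)}=W_k(R)$ and $w_k^{(q)}=W_k(U)$ via Theorems~\ref{thm:two-factor-structure} and~\ref{thm:factor-coronal-decomposition}, then read off (i)--(iv) from edge counts, the expansion $\sum_v d(v)^2=4\sum_v d(v)-4n+\sum_v(d(v)-2)^2$, and regularity when the main polynomial is linear. The only variation is that you justify $\sum_v(d_R(v)-2)^2\ge2$, with equality exactly for paths, through the leaf-count identity, whereas the paper uses $d_R(v)\ge1$ and $\sum_v(d_R(v)-2)=-2$ directly; also, that step is an exact expansion, not an application of Cauchy--Schwarz.
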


\begin{proof}
By Theorems~\ref{thm:two-factor-structure} and \ref{thm:factor-coronal-decomposition}, a disconnected mate has the form $H=R\cup U$, and the ordering $(p,q)$ may be chosen so that \(\Gamma_p=\Gamma_R, \Gamma_q=\Gamma_U,\) where $R$ is a tree and $U$ is connected, bipartite, and unicyclic.

\noindent{\rm (i)}
Comparison of Laurent expansions gives \(w_k^{(p)}=W_k(R),\quad w_k^{(q)}=W_k(U), \quad k\ge0.\) Hence \(w_k^{(p)},\,w_k^{(q)}\in\mathbb Z_{\ge0}.\) Also $n_p=|V(R)|\ge1$.
The unique cycle of $U$ is even and has length at least four, so $n_q=|V(U)|\ge4$.

\noindent{\rm (ii)}
Since \(|E(R)|=n_p-1, |E(U)|=n_q,\) we have \(w_1^{(p)}=2n_p-2, w_1^{(q)}=2n_q.\) Thus \(\delta_p=-2, \delta_q=0.\)

\noindent{\rm (iii)}
If $n_p=1$, then $R=K_1$ and $w_2^{(p)}=0$. For $n_p\ge2$,
\[
w_2^{(p)}=\sum_{v\in V(R)}d_R(v)^2 =4n_p-8+\sum_{v\in V(R)}(d_R(v)-2)^2 \ge 4n_p-6.
\]
Here $\sum_{v\in V(R)}(d_R(v)-2)=-2$ and $d_R(v)\ge1$. Equality holds exactly when two vertices have degree one and all others have degree two, namely when $R$ is a path.
For $U$, \(w_2^{(q)}=4n_q+\sum_{v\in V(U)}(d_U(v)-2)^2\ge 4n_q,\) with equality exactly when $U$ is a cycle. Moreover, \(\sum_v d(v)^2 \equiv \sum_v d(v) \equiv0\pmod2,\)
so both second moments are even.

\noindent{\rm (iv)}
If $p(x)=x-a$, then $M_R(x)=x-a$. Thus $a$ is the only main eigenvalue of $R$, and $A(R)\mathbf1=a\mathbf1$. Hence $R$ is $a$-regular. The only connected regular trees are
$K_1$ and $K_2$, giving \((a,n_p)=(0,1) \quad \text{or} \quad (1,2).\) If $q(x)=x-a$, the same argument makes $U$ $a$-regular. Since $U$ is unicyclic, its average degree is two, so $a=2$ and $U$ is a cycle. Its bipartiteness makes the cycle even.
\end{proof}

For the linear cases,
\[
\begin{alignedat}{3}
p(x)=x
    &\quad\Longrightarrow\quad&
  R&=K_1,
   &\qquad
 \Gamma_R(x)&=\frac{1}{x},\\
    p(x)=x-1
    &\quad\Longrightarrow\quad&
    R&=K_2,
    &
    \Gamma_R(x)&=\frac{2}{x-1},\\[2mm]
    q(x)=x-2
    &\quad\Longrightarrow\quad&
    U&=C_{n_q},
    &
  \Gamma_U(x)&=\frac{n_q}{x-2},
\end{alignedat}
\]
where $n_q$ is even and $n_q\ge4$. Thus a linear main factor $x-a$ with $a\notin\{0,1,2\}$ rules out a disconnected generalized cospectral mate.

\medskip
\noindent\textit{A factor-coronal tree-forcing obstruction.}
\medskip

\begin{corollary}[Factor-coronal tree forcing]\label{cor:unified-tree-forcing}
Let $T$ be a tree with \(\kappa_{\mathrm m}(T)=2, M_T=fg,\) and let \(\Gamma_T=\Gamma_f+\Gamma_g\) be the canonical decomposition of Theorem~\ref{thm:factor-coronal-decomposition}. If neither ordering $(p,q)=(f,g)$ nor $(p,q)=(g,f)$ satisfies conditions {\rm (i)}--{\rm (iv)} of Theorem~\ref{thm:unified-factor-coronal-obstruction}, then every graph generalized cospectral with $T$ is a tree.
\end{corollary}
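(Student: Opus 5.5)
This is a contrapositive argument built on the trichotomy already established for mates of $T$. Let $H$ be generalized cospectral with $T$. By Theorem~\ref{thm:two-factor-structure}, since $\kappa_{\mathrm m}(T)=2$, either $H$ is a tree or $H=R\cup U$. In the second case $R$ is a tree, $U$ is a connected bipartite unicyclic graph, and, after possibly interchanging $f$ and $g$, we have $M_R=f$ and $M_U=g$. So it suffices to show that the second alternative cannot occur under the hypothesis of the corollary.

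Suppose, for contradiction, that such a disconnected mate $H=R\cup U$ exists. The plan proceeds in three steps.

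\begin{enumerate}
\item Invoke Theorem~\ref{thm:factor-coronal-decomposition}. It identifies $\Gamma_R$ and $\Gamma_U$ with the canonical summands of the decomposition $\Gamma_T=\Gamma_f+\Gamma_g$. Here one must check that the labelling used there is consistent: the summand matched to $R$ is the one whose denominator is $M_R$. This holds by uniqueness of the proper partial-fraction decomposition.

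\item Apply Theorem~\ref{thm:unified-factor-coronal-obstruction}, whose hypothesis is exactly the existence of a disconnected mate. It produces an ordering $(p,q)\in\{(f,g),(g,f)\}$, with $p$ attached to the tree component and $q$ to the unicyclic component. For this ordering, conditions (i)--(iv) all hold for the factor moments $w_k^{(p)}$ and $w_k^{(q)}$ of the canonical decomposition.

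\item This contradicts the assumption that neither ordering satisfies (i)--(iv). Hence $c(H)=1$, and by Proposition~\ref{prop:tree-mate-cycle-rank} (equivalently, $\beta(H)=c(H)-1=0$) the graph $H$ is a tree.
\end{enumerate}

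There is no real obstacle here. The only point needing care is that the factor moments in (i)--(iv) depend only on $T$ through the canonical decomposition, and not on the hypothetical mate $H$. Theorem~\ref{thm:factor-coronal-decomposition} guarantees this, since it forces $\Gamma_R$ and $\Gamma_U$ to equal the intrinsically defined $\Gamma_f$ and $\Gamma_g$. This is what makes the conditions checkable on $T$ alone. I would state this point explicitly to close the argument.
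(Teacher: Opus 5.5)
Your proposal is correct and matches the paper's proof: assuming a disconnected mate exists, Theorem~\ref{thm:unified-factor-coronal-obstruction} yields an ordering satisfying (i)--(iv), which contradicts the hypothesis, so every mate is connected and Proposition~\ref{prop:tree-mate-cycle-rank} makes it a tree. Your Step~1 and your remark that the conditions depend only on $T$ are already built into the proof of that theorem, so they are harmless but redundant.
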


\begin{proof}
If a disconnected generalized cospectral mate existed, Theorem~\ref{thm:unified-factor-coronal-obstruction} would give an ordering of $f$ and $g$ satisfying conditions
{\rm (i)}--{\rm (iv)}. Hence no disconnected mate exists.

Every generalized cospectral mate of $T$ is therefore connected, and Proposition~\ref{prop:tree-mate-cycle-rank} gives that it is a tree.
\end{proof}

A single nonintegral or negative factor moment certifies nonrealizability. No finite test for nonnegativity of the full factor-moment sequence is implied.

\subsection{Matching restrictions and finite certification}\label{subsec:matching-restrictions}
The factor-moment conditions above are componentwise. We now develop complementary restrictions arising from the matching polynomial and the nullity of a possible disconnected mate. Besides yielding additional structural constraints, these relations also provide a finite certification mechanism: once the canonical factor coronals are realized by candidate graph components, finitely many matching identities can, under a degree condition, force full generalized cospectrality.

For a graph $X$, let $m_k(X)$ denote the number of $k$-matchings of $X$, with $m_0(X)=1$, and define \[\mu_X(x):=\sum_{k\ge0}(-1)^k m_k(X)x^{|V(X)|-2k}.\] The matching polynomial is multiplicative over disjoint unions, and for every forest $X$, $\phi_X(x)=\mu_X(x)$ \cite{BrouwerHaemers2012}.

We first derive necessary conditions for an actual disconnected generalized cospectral mate. Suppose that $T$ has a disconnected generalized cospectral mate
\(H=R\cup U,\) where $R$ is a tree and $U$ is connected, bipartite, and unicyclic, as in Theorem~\ref{thm:two-factor-structure}. Let $C=C_{2\ell}$, $\ell\geq 2$, be the unique cycle of $U$, and put \(F:=U-V(C),\) which is a forest.

\begin{theorem}[Matching relations for a disconnected mate]\label{thm:matching-coefficient-relation}
With the notation above, \(\phi_U(x)=\mu_U(x)-2\mu_F(x).\) For every $k\ge0$, \(m_k(T)=m_k(H)- 2(-1)^\ell m_{k-\ell}(R\cup F),\) with the convention $m_j(X)=0$ for $j<0$. Hence \(m_k(T)=m_k(H),  k<\ell,\) while \(m_\ell(T)-m_\ell(H)=-2(-1)^\ell.\)
\end{theorem}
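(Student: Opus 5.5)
The approach is to apply the Sachs / Godsil–Gutman expansion of the characteristic polynomial of a unicyclic graph, and then compare coefficients using $\phi_T=\phi_H$ and $\phi_T=\mu_T$, since $T$ is a tree.

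\emph{Step 1: the formula for $\phi_U$.} For any graph, Sachs' theorem writes $\phi_X(x)=\sum_{S}(-1)^{c(S)}2^{\mathrm{cyc}(S)}x^{n-|V(S)|}$, where $S$ ranges over basic subgraphs: disjoint unions of edges and cycles. In $U$ the only cycle is $C=C_{2\ell}$. Basic subgraphs with no cycle give exactly $\mu_U(x)$. A basic subgraph containing $C$ consists of $C$ together with a matching of $F=U-V(C)$. Such a subgraph with $j$ edges of $F$ has $j+1$ components, spans $2\ell+2j$ vertices, and carries weight $(-1)^{j+1}\cdot 2$. Summing over $j$ gives $-2\sum_j(-1)^j m_j(F)x^{|V(F)|-2j}=-2\mu_F(x)$, because $|V(F)|=|V(U)|-2\ell$. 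Hence $\phi_U=\mu_U-2\mu_F$. Equivalently, one could use the standard edge/cycle recurrence for $\phi$ at a cycle edge, but the Sachs count is most direct.

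\emph{Step 2: the coefficient comparison.} Since $R$ is a tree, $\phi_R=\mu_R$. Multiplicativity then gives
\[
\phi_H=\phi_R\phi_U=\mu_R\mu_U-2\mu_R\mu_F=\mu_H-2\mu_{R\cup F}.
\]
Generalized cospectrality gives $\phi_T=\phi_H$, and $\phi_T=\mu_T$. Let $n=|V(T)|=|V(H)|$. The coefficient of $x^{n-2k}$ in $\mu_T$ is $(-1)^k m_k(T)$, and in $\mu_H$ it is $(-1)^k m_k(H)$. We have $|V(R\cup F)|=n-2\ell$, so $x^{n-2k}=x^{(n-2\ell)-2(k-\ell)}$, and the coefficient of this power in $\mu_{R\cup F}$ is $(-1)^{k-\ell}m_{k-\ell}(R\cup F)$. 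This uses the convention that this coefficient is zero when $k<\ell$, and it is consistent when $k-\ell$ exceeds the matching number. Equating coefficients and multiplying by $(-1)^k$ yields
\[
m_k(T)=m_k(H)-2(-1)^{\ell}m_{k-\ell}(R\cup F).
\]

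\emph{Step 3: the special cases.} For $k<\ell$ the correction term vanishes, so $m_k(T)=m_k(H)$. For $k=\ell$ we have $m_0(R\cup F)=1$, giving $m_\ell(T)-m_\ell(H)=-2(-1)^\ell$.

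The only delicate point is the sign and power bookkeeping in the Sachs expansion: counting components and checking the factor $2$ for the single cycle. That step is where care is needed, though the remaining steps are routine.
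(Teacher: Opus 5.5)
Your proof is correct and follows essentially the paper's argument. The paper obtains $\phi_U=\mu_U-2\mu_F$ by expanding the determinant directly over permutations (fixed points, edge transpositions, and the two orientations of $C$). That expansion is exactly the Sachs basic-subgraph count you cite. You then carry out the same coefficient comparison of $\mu_T=\mu_H-2\mu_{R\cup F}$ at $x^{n-2k}$ as the paper does.
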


\begin{proof}
In the determinant expansion of $\phi_U(x)$, a nonzero permutation consists of fixed points, edge transpositions, and possibly one of the two orientations of the unique cycle
$C=C_{2\ell}$. The terms not containing $C$ are exactly the matching terms and contribute $\mu_U(x)$.

Each orientation of $C$ contributes \((-1)^{2\ell-1}(-1)^{2\ell}=-1.\) If it is accompanied by a $j$-matching of $F=U-V(C)$, the additional contribution is $(-1)^j$, with $|V(F)|-2j$ fixed points. The terms containing $C$ therefore contribute
\[
-2\sum_{j\ge0}(-1)^j m_j(F)x^{|V(F)|-2j}=-2\mu_F(x).
\]
This gives \(\phi_U(x)=\mu_U(x)-2\mu_F(x).\) Cospectrality and $H=R\cup U$ give \(\phi_T=\phi_H=\phi_R\phi_U.\) Since $T$ and $R$ are trees, \(\mu_T=\mu_R(\mu_U-2\mu_F)=\mu_H-2\mu_{R\cup F}.\)

Let $n=|V(T)|$. Since $|V(R\cup F)|=n-2\ell$, comparison of the coefficients of $x^{n-2k}$ gives
\[
(-1)^k m_k(T)=(-1)^k m_k(H)- 2(-1)^{k-\ell}m_{k-\ell}(R\cup F).
\]
Hence \(m_k(T)=m_k(H)-2(-1)^\ell m_{k-\ell}(R\cup F).\)
For $k<\ell$ the last matching number is zero, whereas for $k=\ell$ it equals $m_0(R\cup F)=1$.
The two final assertions follow.
\end{proof}

As an immediate consequence of Theorem \ref{thm:matching-coefficient-relation}, if $|C|\equiv0\pmod4$, then
\(
m_\ell(T)=m_\ell(H)-2,
\)
whereas if $|C|\equiv2\pmod4$, then
\(
m_\ell(T)=m_\ell(H)+2.
\)
Thus the number of $\ell$-matchings need not be determined by the adjacency spectrum.

\begin{corollary}[Cycle-length restriction]\label{cor:Cycle-length restriction}
With the notation of Theorem~\ref{thm:matching-coefficient-relation}, the unique cycle $C=C_{2\ell}$ of the unicyclic component $U$ has length at least $6$. Equivalently,
$\ell \geq 3$.
\end{corollary}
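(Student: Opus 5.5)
The plan is to argue by contradiction. Suppose the unique cycle of $U$ has length $4$, so $\ell=2$; since $U$ is bipartite, $\ell\ge2$ is the only other possibility to exclude. Theorem~\ref{thm:matching-coefficient-relation} then gives
\[
m_2(T)-m_2(H)=-2(-1)^2=-2 .
\]
It therefore suffices to show that $m_2$ is the same for $T$ and $H$. This should follow from data shared by generalized cospectral graphs: the number of edges, and the second total walk count.

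\textbf{Step 1: express $m_2$ through edge and degree data.} For any graph $X$, two distinct edges either share exactly one vertex or are disjoint. Counting unordered pairs of edges gives
\[
m_2(X)=\binom{|E(X)|}{2}-\sum_{v\in V(X)}\binom{d_X(v)}{2}
=\binom{|E(X)|}{2}-\tfrac12\sum_{v}d_X(v)^2+|E(X)| .
\]
So $m_2(X)$ is determined by $|E(X)|$ and $\sum_v d_X(v)^2=W_2(X)$.

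\textbf{Step 2: these data agree for $T$ and $H$.} Proposition~\ref{prop:tree-mate-cycle-rank} gives $|E(H)|=|E(T)|$. Corollary~\ref{cor:gs-coronal} gives $\Gamma_H=\Gamma_T$. Comparing the coefficients of $x^{-3}$ in the Laurent expansions at infinity, exactly as in \eqref{eq:first-three-walk-moments}, yields
\[
W_2(H)=W_2(T),\qquad\text{that is,}\qquad \sum_v d_H(v)^2=\sum_v d_T(v)^2 .
\]
Hence $m_2(H)=m_2(T)$, which contradicts the displayed relation. Thus $\ell\ge3$, and the cycle has length at least $6$.

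The only point needing care is Step 2. The adjacency spectrum alone does not determine the degree-square sum, since $\operatorname{tr}A^4$ mixes it with the number of $4$-cycles. This is exactly where the coronal, and hence the complement spectrum, enters. Once $W_2$ is recognized as a Laurent coefficient of the shared coronal, the rest is routine.

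One could instead try to use $\operatorname{tr}A^4$ directly. For a triangle-free graph,
\[
\operatorname{tr}A^4=2|E|+4\sum_v\binom{d_v}{2}+8\,\#C_4 .
\]
Combined with $W_2$, this would only show that the numbers of $4$-cycles agree. That already gives a contradiction, because $T$ has no $4$-cycles while $U$ would have exactly one. However, the matching argument above is the more direct route and uses Theorem~\ref{thm:matching-coefficient-relation} as intended.
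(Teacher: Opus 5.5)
Your proof is correct and is essentially the paper's argument: you write $m_2(X)=\bigl(e^2+e-W_2(X)\bigr)/2$, take $|E(T)|=|E(H)|$ and $W_2(T)=W_2(H)$ from the $x^{-3}$ coefficient of $\Gamma_T=\Gamma_H$, and so contradict $m_2(T)-m_2(H)=-2$, which Theorem~\ref{thm:matching-coefficient-relation} gives when $\ell=2$. Your closing variant via $\operatorname{tr}A^4$ is also valid: with $e$ and $W_2$ fixed, it shows the numbers of $4$-cycles agree, which excludes a $4$-cycle in $U$ directly.
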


\begin{proof}
For any simple graph $X$ with $e=|E(X)|$, the number of
$2$-matchings satisfies
\[
m_2(X)=
\binom{e}{2}
-
\sum_{v\in V(X)} \binom{d_X(v)}{2}.
\]
Since
\[
W_2(X)=
\mathbf{1}^{\mathsf T}A(X)^2\mathbf{1}=
\sum_{v\in V(X)} d_X(v)^2
\]
and
\[
\sum_{v\in V(X)} d_X(v)=2e,
\]
it follows that
\[
m_2(X)=
\frac{e^2+e-W_2(X)}{2}.
\]

Now let $H=R\cup U$ be a disconnected generalized cospectral mate of the tree $T$. Adjacency cospectrality gives \(|E(T)|=|E(H)|.\) Moreover, by Corollary~\ref{cor:gs-coronal},
\(
\Gamma_T(x)=\Gamma_H(x).
\)
Comparing the coefficient of $x^{-3}$ in the Laurent expansions at infinity gives
\(
W_2(T)=W_2(H).
\)
Hence the preceding identity yields
\(
m_2(T)=m_2(H).
\)

Suppose, to the contrary, that the unique cycle of $U$ is a $4$-cycle. Then $\ell=2$. By Theorem~\ref{thm:matching-coefficient-relation},
\[
m_\ell(T)-m_\ell(H)=-2(-1)^\ell,
\]
and therefore
\(
m_2(T)-m_2(H)=-2,
\)
contradicting $m_2(T)=m_2(H)$. Thus $\ell\neq 2$. Since $U$ is bipartite and unicyclic, $\ell\geq 2$, and consequently
\(
\ell\geq 3.
\)
Hence the unique cycle of $U$ has length at least six.
\end{proof}

For a graph $X$, let $\eta(X)$ denote the multiplicity of $0$ as an adjacency eigenvalue, and let $\nu(X)$ denote its matching number.

\begin{proposition}[Nullity and maximum-matching restrictions]\label{prop:unicyclic-nullity-restriction}
With the notation above, the following hold.

\medskip
\noindent
{\rm (i)}
The nullity of $U$ satisfies
\begin{equation}\label{eq:bipartite-unicyclic-nullity-options}
\eta(U) \in \{|V(U)|-2\nu(U),\, |V(U)|-2\nu(U)+2 \}.
\end{equation}
Moreover, \(\nu(T)-\nu(R) \in \{\nu(U),\,\nu(U)-1\}.\)

\medskip
\noindent
{\rm (ii)}
Put $s:=\nu(F)$. The exceptional case
\begin{equation}\label{eq:exceptional-nullity}
\eta(U)=|V(U)|-2\nu(U)+2
\end{equation}
occurs if and only if
\begin{equation}\label{eq:exceptional-nullity-conditions}
\nu(U)=\ell+s,\quad
\ell\equiv0\pmod2,\quad
m_{\nu(U)}(U)=2m_s(F).
\end{equation}
Equivalently,
\(\nu(T)-\nu(R)=\nu(U)-1\)
if and only if \eqref{eq:exceptional-nullity-conditions} holds. Otherwise, $\nu(T)-\nu(R)=\nu(U)$.
\end{proposition}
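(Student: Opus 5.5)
The plan is to read the nullity directly off the coefficients of $\phi_U$, using the identity $\phi_U=\mu_U-2\mu_F$ from Theorem~\ref{thm:matching-coefficient-relation}. Put $N=|V(U)|$ and $\nu=\nu(U)$. In $\mu_U$ the lowest-degree term is $(-1)^{\nu}m_\nu(U)x^{N-2\nu}$ with $m_\nu(U)\ge1$. In $\mu_F$ the lowest-degree term is $(-1)^s m_s(F)x^{|V(F)|-2s}=(-1)^s m_s(F)x^{N-2(\ell+s)}$.

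First observe that a maximum matching of $F$ together with a perfect matching of $C_{2\ell}$ is a matching of $U$. Hence $\ell+s\le\nu$, so the exponent $N-2(\ell+s)$ is at least $N-2\nu$. This gives the following.
\begin{itemize}
\item If $\ell+s<\nu$, the $\mu_F$ term lies in higher degree. The coefficient of $x^{N-2\nu}$ in $\phi_U$ is then $(-1)^\nu m_\nu(U)\neq0$, and every lower coefficient vanishes, so $\eta(U)=N-2\nu$.
\item If $\ell+s=\nu$, the coefficient of $x^{N-2\nu}$ is $(-1)^\nu\bigl(m_\nu(U)-2(-1)^{\ell}m_s(F)\bigr)$. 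This is nonzero unless $\ell$ is even and $m_\nu(U)=2m_s(F)$.
\end{itemize}

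In the vanishing case I must show the nullity is exactly $N-2\nu+2$. Since $U$ is bipartite, $\phi_U$ involves only powers $x^{N-2k}$, so the nullity jumps by at least $2$. It remains to show the coefficient of $x^{N-2\nu+2}$ is nonzero. That coefficient is $(-1)^{\nu-1}\bigl(m_{\nu-1}(U)+2m_{s-1}(F)\bigr)$, using $\ell$ even. It is nonzero because $m_{\nu-1}(U)\ge1$; note that $\nu\ge\ell\ge2$, so $\nu-1\ge0$. Together these cases give both (ii) and the dichotomy in~(i).

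For the matching statements, use that $T$ is a tree and that cospectrality gives $\eta(T)=\eta(H)=\eta(R)+\eta(U)$. For forests, $\phi=\mu$, so $\eta(T)=n-2\nu(T)$ and $\eta(R)=|V(R)|-2\nu(R)$. Since $n=|V(R)|+N$, we obtain
\[
\eta(U)=N-2\bigl(\nu(T)-\nu(R)\bigr).
\]
Comparing with the two possible values of $\eta(U)$ gives $\nu(T)-\nu(R)=\nu(U)$ in the generic case and $\nu(U)-1$ in the exceptional case. The equivalence in (ii) and the membership in (i) follow directly.

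The main obstacle is the exceptional case: one must check that the cancellation at degree $N-2\nu$ cannot propagate to degree $N-2\nu+2$. I expect the sign bookkeeping to settle this. With $\ell$ even, the two contributions at that degree carry the same sign, so they cannot cancel.
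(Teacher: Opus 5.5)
Your route differs from the paper's, but the step you single out as the main obstacle fails as written. For every $k$, the coefficient of $x^{N-2k}$ in $\phi_U=\mu_U-2\mu_F$ is $(-1)^k\bigl(m_k(U)-2(-1)^{\ell}m_{k-\ell}(F)\bigr)$. So at $k=\nu-1$, with $\ell$ even, it equals $(-1)^{\nu-1}\bigl(m_{\nu-1}(U)-2m_{s-1}(F)\bigr)$: a difference, not a sum.

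Your sign heuristic is backwards. Going from degree $N-2\nu$ to $N-2\nu+2$ flips the signs of the $\mu_U$ and $\mu_F$ contributions simultaneously. They therefore stay of opposite sign, exactly as at degree $N-2\nu$, where in the exceptional case they cancel. For example, let $U$ be a $4$-cycle with a pendant path $P_2$ attached at one vertex, so $\ell=2$, $s=1$, $\nu(U)=3$ and $m_3(U)=2=2m_1(F)$. Then $\phi_U=x^6-6x^4+6x^2$, and the $x^2$ coefficient is $m_2(U)-2m_0(F)=8-2$, not $8+2$.

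So $m_{\nu-1}(U)\ge1$ by itself does not exclude $\eta(U)\ge N-2\nu+4$. Both the dichotomy in (i) and the ``if'' direction of (ii) need this exclusion. The paper obtains it by citing the Guo--Yan--Yeh theorem $\eta(X)\in\{n-2\nu(X)-1,\,n-2\nu(X),\,n-2\nu(X)+2\}$ for unicyclic $X$, using bipartite parity to discard the first value. Once the lowest coefficient vanishes, only $N-2\nu+2$ remains.

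Your self-contained route can be repaired directly. Because $V(F)\cap V(C)=\varnothing$, each $(s-1)$-matching of $F$ together with either of the two perfect matchings of $C_{2\ell}$ is a $(\nu-1)$-matching of $U$, and distinct pairs give distinct matchings. Hence $m_{\nu-1}(U)\ge 2m_{s-1}(F)$. Moreover, a maximum matching of $F$ together with $\ell-1$ edges of a perfect matching of $C_{2\ell}$ is a further $(\nu-1)$-matching. It is not in this image, since it uses only $\ell-1$ cycle edges. Thus $m_{\nu-1}(U)\ge 2m_{s-1}(F)+1$, and the coefficient of $x^{N-2\nu+2}$ is nonzero.

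With this inequality in place, the rest of your argument is correct: the generic cases, the cancellation criterion at degree $N-2\nu$, and $\eta(U)=N-2(\nu(T)-\nu(R))$. It then proves (i) and (ii) without the external nullity theorem, which is a genuine advantage over the paper's proof.
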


\begin{proof}
\noindent{\rm (i)}
Guo, Yan and Yeh \cite{GuoYanYeh2009} proved that an $n$-vertex unicyclic graph $X$ satisfies
\[
\eta(X) \in \{ n-2\nu(X)-1,\, n-2\nu(X),\, n-2\nu(X)+2\}.
\]
Since $U$ is bipartite, its adjacency matrix has the form
\[
A(U) =
\begin{pmatrix}
 0&B\\
 B^T&0
\end{pmatrix}.
\]
Thus \(\operatorname{rank}A(U)=2\operatorname{rank}B,\) and hence $\eta(U)\equiv |V(U)|\pmod2$. The value $|V(U)|-2\nu(U)-1$ is therefore impossible, giving \eqref{eq:bipartite-unicyclic-nullity-options}.

For a forest $X$, \(\eta(X)=|V(X)|-2\nu(X).\) Since \(\phi_T=\phi_R\phi_U,\) we have \(\eta(T)=\eta(R)+\eta(U).\) Together with \(|V(T)|=|V(R)|+|V(U)|,\) this gives
\begin{equation}\label{eq:nullity-matching-difference}
 \eta(U)=|V(U)|-2\bigl(\nu(T)-\nu(R)\bigr).
\end{equation}
Comparison with \eqref{eq:bipartite-unicyclic-nullity-options} yields \(\nu(T)-\nu(R) \in \{\nu(U),\,\nu(U)-1\}.\)

\medskip
\noindent{\rm (ii)}
Put $s=\nu(F)$. A maximum matching of $F$ together with a perfect matching of $C_{2\ell}$ gives \(\nu(U)\ge\ell+s.\) The lowest nonzero terms of $\mu_U$ and $\mu_F$ have degrees $|V(U)|-2\nu(U)$ and $|V(U)|-2\ell-2s$, respectively.

If $\nu(U)>\ell+s$, the lowest-degree term of $\mu_U$ cannot cancel in \(\phi_U=\mu_U-2\mu_F.\) Hence \(\eta(U)=|V(U)|-2\nu(U),\) so \eqref{eq:exceptional-nullity} requires \(\nu(U)=\ell+s.\) Under this equality, the coefficient of the common lowest power in $\phi_U$ is \((-1)^{\nu(U)}m_{\nu(U)}(U)-2(-1)^s m_s(F).\) Its vanishing is equivalent to \((-1)^\ell m_{\nu(U)}(U)=2m_s(F).\) Both matching counts are positive, so cancellation occurs exactly when \(\ell\equiv0\pmod2,  m_{\nu(U)}(U)=2m_s(F).\) This proves the necessity of \eqref{eq:exceptional-nullity-conditions}.

Conversely, assume \eqref{eq:exceptional-nullity-conditions}. The lowest-degree coefficient of $\phi_U$ vanishes, so \(\eta(U)>|V(U)|-2\nu(U).\) Equation~\eqref{eq:bipartite-unicyclic-nullity-options} then gives \(\eta(U)=|V(U)|-2\nu(U)+2.\)

Finally, \eqref{eq:nullity-matching-difference} shows that \eqref{eq:exceptional-nullity} is equivalent to \(\nu(T)-\nu(R)=\nu(U)-1.\) In the other nullity case,
\(\nu(T)-\nu(R)=\nu(U).\)
\end{proof}

\begin{corollary}[Matching-number gap]
With the notation above, let $C=C_{2\ell}$ be the unique cycle of the unicyclic component $U$. Then \(\nu(T)-\nu(R)\geq \ell-1\geq 2.\) In particular,\[\nu(R)\leq \nu(T)-2.\]
\end{corollary}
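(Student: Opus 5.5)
The bound follows by combining Proposition~\ref{prop:unicyclic-nullity-restriction} with a lower bound on $\nu(U)$ coming from the cycle, and then applying Corollary~\ref{cor:Cycle-length restriction}.

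First, by Proposition~\ref{prop:unicyclic-nullity-restriction}(i), we have
\[
\nu(T)-\nu(R)\in\{\nu(U),\,\nu(U)-1\}.
\]
In either case, therefore, $\nu(T)-\nu(R)\ge \nu(U)-1$.

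Second, we bound $\nu(U)$ from below. The cycle $C=C_{2\ell}$ is a subgraph of $U$ and has a perfect matching of size $\ell$. Hence $\nu(U)\ge\ell$. (In the notation of Proposition~\ref{prop:unicyclic-nullity-restriction}(ii), this is the inequality $\nu(U)\ge \ell+s\ge\ell$ with $s=\nu(F)\ge0$.) Combining the two steps gives
\[
\nu(T)-\nu(R)\ge\nu(U)-1\ge\ell-1.
\]

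Third, Corollary~\ref{cor:Cycle-length restriction} gives $\ell\ge3$, so $\ell-1\ge2$. Rearranging $\nu(T)-\nu(R)\ge2$ yields $\nu(R)\le\nu(T)-2$.

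Every step is a direct citation. The only point needing care is the case split in Proposition~\ref{prop:unicyclic-nullity-restriction}. The exceptional case $\nu(T)-\nu(R)=\nu(U)-1$ is exactly where the "$-1$" in the bound $\ell-1$ comes from. In the non-exceptional case the argument actually yields the stronger bound $\ell$.
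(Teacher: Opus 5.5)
Your proof is correct and follows the paper's own argument step for step. Proposition~\ref{prop:unicyclic-nullity-restriction}(i) gives $\nu(T)-\nu(R)\ge\nu(U)-1$, the perfect matching of $C_{2\ell}$ gives $\nu(U)\ge\ell$, and Corollary~\ref{cor:Cycle-length restriction} gives $\ell\ge3$.
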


\begin{proof}
By Corollary~\ref{cor:Cycle-length restriction}, the unique cycle of $U$ has length at least $6$, so $\ell\geq 3$. Since $U$ contains the cycle $C_{2\ell}$, it contains a matching of size $\ell$. Hence \(\nu(U)\geq \ell.\)

By Proposition~\ref{prop:unicyclic-nullity-restriction}(i),
\[
\nu(T)-\nu(R)\in\{\nu(U),\,\nu(U)-1\}.
\]
Therefore
\(
\nu(T)-\nu(R)
\geq \nu(U)-1
\geq \ell-1
\geq 2.
\)
Equivalently,
\(
\nu(R)\leq \nu(T)-2.
\)
\end{proof}

We now reverse the viewpoint. Instead of assuming that $H=R\cup U$ is already a generalized cospectral mate, suppose only that the two canonical factor coronals are realized by a tree and a connected bipartite unicyclic graph. Equality of the coronals determines the main spectral part, but it does not a priori determine the remaining factor of the characteristic polynomial. The next result shows that, when this remaining degree is sufficiently small, only finitely many of the matching relations above are needed to recover the full characteristic polynomial.

\begin{proposition}[Finite matching certification]\label{prop:finite-matching-certification}
Let $T$ be a tree on $n$ vertices with $k_{\mathrm m}(T)=2$, write \(M_T(x)=f(x)g(x),\) and let \(\Gamma_T=\Gamma_f+\Gamma_g\) be the canonical factor-coronal decomposition of Theorem~\ref{thm:factor-coronal-decomposition}. Suppose that, after interchanging $f$ and $g$ if necessary, there exist a tree $R$ and a connected bipartite unicyclic graph $U$ such that \(\Gamma_R=\Gamma_f, \Gamma_U=\Gamma_g.\) Set \(H:=R\cup U.\) Let $C=C_{2\ell}$ be the unique cycle of $U$, and put \(F:=U-V(C).\)
Define
\[
d:=\deg\frac{\phi_T}{M_T}
   =n-\deg M_T.
\]

Let $s$ be an integer with
\(
0\leq s\leq \left\lfloor\frac{n}{2}\right\rfloor.
\)
Suppose that
\begin{equation}\label{eq:finite-matching-relations}
m_k(T)
=m_k(H)-2(-1)^\ell
m_{k-\ell}(R\cup F),
\quad
0\leq k\leq s,
\end{equation}
where $m_j(X)=0$ for $j<0$. If $d\leq 2s+1$, then $T$ and $H$ are generalized cospectral.

In particular, the conclusion holds automatically if $d\leq 3$. It also holds automatically if the unique cycle of $U$ has length at least six and $d\leq 5$.
\end{proposition}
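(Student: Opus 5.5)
The plan is to prove $\phi_T=\phi_H$ directly. Generalized cospectrality then follows from the coronals: $\Gamma_H=\Gamma_R+\Gamma_U=\Gamma_f+\Gamma_g=\Gamma_T$, and Lemma~\ref{lem:complement-coronal} gives $\phi_{\overline T}=\phi_{\overline H}$.

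\emph{Step 1: orders and the common main factor.} Comparing the coefficient of $x^{-1}$ in $\Gamma_H=\Gamma_T$ gives $|V(H)|=W_0(H)=W_0(T)=n$. So $\phi_T$ and $\phi_H$ are both monic of degree $n$.

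By Lemma~\ref{lem:coronal-denominator}, the reduced denominator of $\Gamma_R=\Gamma_f$ is $M_R$. Since $\Gamma_f$ is reduced with denominator $f$ (Theorem~\ref{thm:factor-coronal-decomposition}), $M_R=f$, and likewise $M_U=g$. Hence $f\mid\phi_R$ and $g\mid\phi_U$. Because $f$ and $g$ are coprime, $M_T=fg$ divides $\phi_R\phi_U=\phi_H$, and of course $M_T\mid\phi_T$. Write $\phi_T=M_TQ_T$ and $\phi_H=M_TQ_H$, where $Q_T,Q_H$ are monic of degree $d$.

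\emph{Step 2: coefficient comparison.} The determinant expansion in the proof of Theorem~\ref{thm:matching-coefficient-relation} does not use cospectrality. It gives $\phi_U=\mu_U-2\mu_F$, and hence
\[
\phi_H=\mu_R(\mu_U-2\mu_F)=\mu_H-2\mu_{R\cup F}.
\]
We also have $\phi_T=\mu_T$. Both sides involve only the powers $x^{n-2k}$, since $|V(R\cup F)|=n-2\ell$. Expanding as in that proof, the coefficient of $x^{n-2k}$ in $\phi_H$ is
\[
(-1)^k\bigl(m_k(H)-2(-1)^\ell m_{k-\ell}(R\cup F)\bigr).
\]
So hypothesis \eqref{eq:finite-matching-relations} says exactly that the coefficients of $x^{n-2k}$ in $\phi_T$ and $\phi_H$ agree for $0\le k\le s$. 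All other coefficients of degree greater than $n-2s-2$ vanish for parity reasons. Therefore $\deg(\phi_T-\phi_H)\le n-2s-2$.

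\emph{Step 3: the degree argument.} We have $\phi_T-\phi_H=M_T(Q_T-Q_H)$. If this were nonzero, its degree would be at least $\deg M_T=n-d$. But $d\le 2s+1$ gives $n-2s-2\le n-d-1<n-d$, a contradiction. Hence $\phi_T=\phi_H$. This step is the crux: it is where the degree bound enters, and it relies on the divisibility $M_T\mid\phi_H$ obtained from the realizations.

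\emph{The automatic cases.} It remains to check that \eqref{eq:finite-matching-relations} holds for small $s$ without extra hypotheses.
\begin{itemize}
\item For $k=0$ the relation is trivial, since $\ell\ge2$ makes the correction term vanish.
\item For $k=1$, the correction term also vanishes, and $m_1=|E|$. Comparing $W_1$ in $\Gamma_T=\Gamma_H$ gives $|E(T)|=|E(H)|$. Thus $s=1$ is always available, which covers $d\le3$.
\item For $k=2$, assume $\ell\ge3$, so the correction term vanishes. The identity $m_2(X)=(e^2+e-W_2(X))/2$ from the proof of Corollary~\ref{cor:Cycle-length restriction}, together with equality of $e$ and of $W_2$ (again read off from the coronals), gives $m_2(T)=m_2(H)$. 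Thus $s=2$ is available, which covers $d\le5$.
\end{itemize}
In each case $s\le\lfloor n/2\rfloor$ holds trivially whenever the stated degree bound is relevant.
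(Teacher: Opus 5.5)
Your proposal is correct and follows essentially the same route as the paper: the divisibility $M_T\mid(\phi_T-\phi_H)$, agreement of the coefficients of $x^{n-2k}$ for $0\le k\le s$ via $\phi_H=\mu_H-2\mu_{R\cup F}$ together with bipartite parity, the degree comparison $n-2s-2<n-d$, and the complement--coronal identity, with the same verification of the automatic cases $s=1$ and $s=2$. The only cosmetic difference is how you get $M_T\mid\phi_H$: you argue componentwise from $M_R=f$ and $M_U=g$, whereas the paper reads off $M_H=M_T$ directly from $\Gamma_H=\Gamma_T$ (coprimality of $f$ and $g$ is not actually needed in your version).
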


\begin{proof}
Since \(\Gamma_H=\Gamma_R+\Gamma_U=\Gamma_f+\Gamma_g=\Gamma_T,\) comparison of the leading Laurent coefficient at infinity gives \(|V(H)|=|V(T)|=n.\)
By Lemma~\ref{lem:coronal-denominator}, the reduced monic denominator of a graph coronal is its main polynomial. Hence \(M_H=M_T.\) Since the main polynomial divides the characteristic polynomial, we may write
\(
\phi_T=M_TQ_T,
\phi_H=M_TQ_H.
\)
Consequently,
\begin{equation}\label{eq:main-divides-difference}
M_T\mid (\phi_T-\phi_H).
\end{equation}

Because $T$ is a tree, \(\phi_T(x)=\mu_T(x).\) On the other hand, $R$ is a tree and $U$ is unicyclic with unique cycle $C=C_{2\ell}$. The determinant expansion used in the proof of Theorem~\ref{thm:matching-coefficient-relation} gives, independently of any cospectrality assumption, \(\phi_U(x)=\mu_U(x)-2\mu_F(x).\) Therefore
\(
\phi_H(x)=\phi_R(x)\phi_U(x)=\mu_H(x)-2\mu_{R\cup F}(x).
\)
Thus the coefficient of $x^{n-2k}$ in $\phi_H$ is
\(
(-1)^k m_k(H)
-
2(-1)^{k-\ell}m_{k-\ell}(R\cup F).
\)
Since
\(
(-1)^{k-\ell}=(-1)^k(-1)^\ell,
\)
this coefficient can be written as
\[
(-1)^k
\left(
m_k(H)-2(-1)^\ell
m_{k-\ell}(R\cup F)
\right).
\]
By \eqref{eq:finite-matching-relations}, this agrees with the coefficient $(-1)^k m_k(T)$ of $x^{n-2k}$ in $\phi_T$ for every $0\leq k\leq s$.

Both $T$ and $H$ are bipartite, so their characteristic polynomials contain only powers having the same parity as $n$. Hence, unless $\phi_T=\phi_H$ already,
\begin{equation}\label{eq:difference-degree-bound}
\deg(\phi_T-\phi_H)
\leq n-2s-2.
\end{equation}
On the other hand, $d\leq 2s+1$ gives \(\deg M_T=n-d \geq n-2s-1.\) Therefore
\(
\deg M_T
>
\deg(\phi_T-\phi_H).
\)
Together with \eqref{eq:main-divides-difference}, this forces \(\phi_T=\phi_H.\)

We already have $\Gamma_T=\Gamma_H$. Applying the complement--coronal identity of Lemma~\ref{lem:complement-coronal} to $T$ and $H$ now gives
\(\phi_{\overline{T}}=\phi_{\overline{H}}.\) Hence $T$ and $H$ are generalized cospectral.

It remains to verify the two stated special cases. Equality $\Gamma_T=\Gamma_H$ implies equality of all total walk counts. In particular, \(W_1(T)=W_1(H),\)
and therefore \(|E(T)|=|E(H)|.\) Thus \(m_1(T)=m_1(H).\) Since $\ell\geq 2$, the correction term in \eqref{eq:finite-matching-relations} vanishes for $k=0,1$.
Hence the relations for $k=0,1$ hold automatically. Taking $s=1$ proves the conclusion whenever $d\leq 3$.

For the second special case, assume directly that the unique cycle of the candidate graph $U$ has length at least $6$. Thus $\ell\geq 3$. For any simple graph $X$ with $e=|E(X)|$,
\[
m_2(X)
=
\binom{e}{2}
-
\sum_{v\in V(X)}
\binom{d_X(v)}{2}
=
\frac{e^2+e-W_2(X)}{2}.
\]
Since $\Gamma_T=\Gamma_H$ gives both $|E(T)|=|E(H)|$ and $W_2(T)=W_2(H)$, we obtain $m_2(T)=m_2(H)$.

Because $\ell\geq 3$, the correction term also vanishes for $k=2$. Hence \eqref{eq:finite-matching-relations} holds automatically for $k=0,1,2$. Taking $s=2$ proves the conclusion whenever $d\leq 5$.
\end{proof}

Proposition~\ref{prop:finite-matching-certification} shows that the factor-coronal and matching approaches are complementary. The factor coronals determine the main spectral part and force the divisibility \(M_T\mid(\phi_T-\phi_H),\) while finitely many matching relations control the leading coefficients of the remaining characteristic-polynomial
difference. Once these two pieces overlap in degree, the entire difference must vanish.

\medskip
\noindent\textit{Sharpness of the two-factor case.}
\medskip

The condition $\kappa_{\mathrm m}(T)=2$ does not force generalized cospectral mates of $T$ to be trees. Moreover, the bound \(c(H)\le\kappa_{\mathrm m}(T)\) from Theorem~\ref{thm:main-factor-component} is attained by a generalized cospectral mate of a tree in the two-factor case.

Let $T=S_{2,2,2}$ be the subdivided claw obtained from $K_{1,3}$ by subdividing each edge once, and let $H=C_6\cup K_1$. These graphs are shown in Figure~\ref{fig:sharpness-two-factor}.

\begin{figure}[H]
\centering

\begin{minipage}{0.36\textwidth}
\centering
\color{black}
\begin{tikzpicture}[scale=0.65, every node/.style={circle, fill, inner sep=1.2pt}]
\node (c) at (0,0) {};
\node (a1) at (0,1.2) {};
\node (a2) at (0,2.4) {};
\node (b1) at (-1.05,-0.6) {};
\node (b2) at (-2.1,-1.2) {};
\node (d1) at (1.05,-0.6) {};
\node (d2) at (2.1,-1.2) {};
\draw (c)--(a1)--(a2);
\draw (c)--(b1)--(b2);
\draw (c)--(d1)--(d2);
\end{tikzpicture}

\medskip
(a) $T=S_{2,2,2}$
\end{minipage}
\hspace{0.03\textwidth}
\begin{minipage}{0.36\textwidth}
\centering
\color{black}
\begin{tikzpicture}[scale=0.65, every node/.style={circle, fill, inner sep=1.2pt}]
\foreach \i in {1,...,6}{
  \node (v\i) at ({60*(\i-1)}:1.35) {};
}
\draw (v1)--(v2)--(v3)--(v4)--(v5)--(v6)--(v1);
\node (u) at (2.7,0) {};
\end{tikzpicture}

\bigskip
(b) $H=C_6\cup K_1$
\end{minipage}

\caption{The sharpness example in the two-factor case. The tree $T=S_{2,2,2}$ and the disconnected graph $H=C_6\cup K_1$ are generalized cospectral; moreover, $c(H)=2=\kappa_{\mathrm m}(T)$.}
\label{fig:sharpness-two-factor}
\end{figure}

A direct calculation gives
\begin{equation}\label{eq:sharpness-char}
\phi_T(x)=x(x^2-4)(x^2-1)^2.
\end{equation}

For the coronal of $T$, symmetry gives three coordinates $r,s,t$ for \((xI-A(T))^{-1}\mathbf{1},\) corresponding respectively to the center, the three subdividing
vertices, and the three leaves. They satisfy
\[
xr-3s=1,\quad
-r+xs-t=1,\quad
-s+xt=1.
\]
Solving gives
\[
r+3s+3t
=
\frac{7x-2}{x(x-2)}.
\]
Hence
\begin{equation}
\Gamma_T(x)
=
\frac{7x-2}{x(x-2)}
=
\frac{1}{x}
+
\frac{6}{x-2}.
\label{eq:sharpness-coronal}
\end{equation}

The fraction in \eqref{eq:sharpness-coronal} is reduced, so Lemma~\ref{lem:coronal-denominator} gives \(M_T(x)=x(x-2).\) By \eqref{eq:sharpness-char}, both main factors $x$ and $x-2$ occur with multiplicity one in $\phi_T$. Therefore \(k_{\mathrm m}(T)=2.\) Moreover, by uniqueness in Theorem~\ref{thm:factor-coronal-decomposition},
\eqref{eq:sharpness-coronal} is the canonical factor-coronal decomposition of $T$.

Now set \[R:=K_1, \quad U:=C_6, \quad H:=R\cup U.\] Clearly, \(\Gamma_R(x)=\frac{1}{x}.\) Since $C_6$ is $2$-regular, \(\Gamma_U(x)=\frac{6}{x-2}.\) Thus the two canonical factor coronals of $T$ are realized exactly by the tree $R$ and the connected bipartite unicyclic graph $U$:
\(
\Gamma_R=\Gamma_x,
\Gamma_U=\Gamma_{x-2}.
\)
Here $|V(T)|=7$ and $\deg M_T=2$, so \(d=|V(T)|-\deg M_T=5.\) The unique cycle of $U=C_6$ has length six. Hence the second special case of Proposition~\ref{prop:finite-matching-certification} applies, and it follows that $T$ and $H=K_1\cup C_6$ are generalized cospectral.

Finally, \(c(H)=2=k_{\mathrm m}(T),\) so the component bound in Theorem~\ref{thm:main-factor-component} is attained.

The corresponding factor-moment sequences begin as $(1,0,0,\ldots)$ and $(6,12,24,48,\ldots)$. Their defects are $-2$ and $0$, respectively, and the unicyclic second-moment bound is attained: \(24=4\cdot 6.\) Thus the defect conditions and the second-moment bounds in Theorem~\ref{thm:unified-factor-coronal-obstruction} are simultaneously attained in this example.

The matching relations are exact here as well. With \(R=K_1, U=C_6, \ell=3, F=\varnothing,\) Theorem~\ref{thm:matching-coefficient-relation} gives
\[
m_3(T)-m_3(H)=4-2=2=-2(-1)^3,\]
while
\(
\nu(T)-\nu(R)=3=\nu(U),\)
in agreement with Proposition~~\ref{prop:unicyclic-nullity-restriction}. Thus this example realizes simultaneously the component bound, the factor-coronal decomposition, the matching restrictions, and the finite certification mechanism of Proposition~\ref{prop:finite-matching-certification}.

\section{Applications to DGS trees}\label{sec:applications}
Tree forcing alone does not imply DGS. Once every generalized cospectral mate of a tree $T$ is known to be a tree, it remains to determine $T$ within the class of trees. For the double-star family considered below, adjacency cospectrality already gives this remaining rigidity.

\medskip
\noindent\textit{A reducible double-star family.}
\medskip

Recall that $D(a,b)$ denotes the double star obtained by joining the centers of $K_{1,a}$ and $K_{1,b}$. Let
\begin{equation}\label{eq:double-star-quartic}
q_{a,b}(x):= x^4-(a+b+1)x^2+ab.
\end{equation}
Differences among the $a$ leaves adjacent to one center and among the $b$ leaves adjacent to the other span a subspace of $\ker A(D(a,b))$ of dimension \((a-1)+(b-1)=a+b-2.\)
On the four-dimensional invariant subspace of vectors constant on each of the four vertex classes, the adjacency action is represented by
\[
\begin{pmatrix}
0&1&0&0\\
a&0&1&0\\
0&1&0&b\\
0&0&1&0
\end{pmatrix},
\]
whose characteristic polynomial is $q_{a,b}(x)$. Hence
\begin{equation}\label{eq:double-star-characteristic}
\phi_{D(a,b)}(x)=x^{a+b-2}q_{a,b}(x).
\end{equation}

By symmetry, let $r,s,z,u$ denote the coordinates of $(xI-A(D(a,b)))^{-1}\mathbf1$ at a left leaf, the left center, the right center, and a right leaf, respectively. Then
\[xr-s=1, \quad -ar+xs-z=1, \quad -s+xz-bu=1, \quad xu-z=1.\] Solving gives
\[
r=\frac{x(x^2+x-b)}{q_{a,b}(x)},
s=\frac{x^3+(a+1)x^2-ab}{q_{a,b}(x)},
z=\frac{x^3+(b+1)x^2-ab}{q_{a,b}(x)},
u=\frac{x(x^2+x-a)}{q_{a,b}(x)}.
\]
Since there are $a$ left leaves and $b$ right leaves, \(\Gamma_{D(a,b)}(x)=ar+s+z+bu.\) Thus
\begin{equation}\label{eq:double-star-coronal}
\Gamma_{D(a,b)}(x)=
\frac{(a+b+2)x^3+2(a+b+1)x^2-2abx-2ab}{x^4-(a+b+1)x^2+ab}.
\end{equation}

\begin{lemma}\label{lem:tree-level-determined}
For every integer $m\geq 1$, the double star $D(2m,m+1)$ is determined by its adjacency spectrum within the class of trees.
\end{lemma}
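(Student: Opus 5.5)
Let $T'$ be a tree with $\phi_{T'}=\phi_{D(2m,m+1)}=x^{3m-1}(x^2-m)(x^2-2m-2)$, on $n=3m+3$ vertices. The plan is to read off enough structure from the spectrum to force $T'$ to be a double star, and then pin down its two leaf counts.

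\textbf{Step 1: $T'$ is a double star.} For a tree, $\phi_{T'}=\mu_{T'}$. The nonzero eigenvalues are exactly the four values $\pm\sqrt m$ and $\pm\sqrt{2m+2}$. Hence $\eta(T')=n-4$ and $\nu(T')=2$ (using $\eta=n-2\nu$ for forests). A tree with matching number $2$ has diameter at most $4$, because a path $P_6$ would already contain a $3$-matching. It is not a star, since a star has matching number $1$.

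If the diameter is $4$, then $T'$ has a center $c$ and neighbors $v_1,\dots,v_k$, each with $t_i\ge0$ pendant leaves. Matching number $2$ forces at most one of the $v_i$ to have $t_i\ge1$. Otherwise two edges $v_iw_i$, $v_jw_j$ plus an edge $c v_h$ would give a $3$-matching when $k\ge3$. If $k=2$, then $T'$ is still a double star centered at $c$ and at the unique non-leaf neighbor. So in every case $T'$ is a double star $D(a,b)$, with the diameter-$3$ case included directly.

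Rather than rely only on this case analysis, I would use the count $m_2$ as a check. From the coefficient of $x^{n-4}$, the number $m_2(T')$ equals $ab=2m(m+1)$ for the original tree. I expect the hardest part to be a clean argument that $\nu=2$ forces a double star. The fallback is this: a tree with $\nu=2$ is covered by two vertices in a vertex cover (König), and a tree whose edges are covered by two vertices $u,w$ is either a double star, or $u,w$ are joined through a common neighbor (a spider-like tree). In the latter case I would compute $\phi$ directly and compare.

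\textbf{Step 2: the other two-cover shape.} Suppose the cover vertices $u,w$ are nonadjacent with common neighbor $c$. Then $T'$ consists of $u$ with $p$ further leaves, $w$ with $q$ further leaves, and the path $u c w$. A direct quotient-matrix computation, like the one done for $D(a,b)$, gives
\[
\phi=x^{n-4}\bigl(x^4-(p+q+2)x^2+(pq+p+q)\bigr).
\]
Matching coefficients gives $p+q+2=3m+2$ and $pq+p+q=2m(m+1)$. So $p+q=3m$ and $pq=2m^2-m$, and $p,q$ are roots of $t^2-3mt+2m^2-m$, whose discriminant is $m^2+4m$. This is never a perfect square for $m\ge1$, since $(m+1)^2<m^2+4m<(m+2)^2$ when $m\ge2$, and $m=1$ gives $5$. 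So this shape is excluded.

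\textbf{Step 3: identify $a,b$.} For $T'=D(a,b)$, the quartic \eqref{eq:double-star-quartic} gives $a+b=3m+1$ and $ab=2m(m+1)$. Hence $\{a,b\}=\{2m,m+1\}$, and $T'\cong D(2m,m+1)$.
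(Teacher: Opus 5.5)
Your argument is, in substance, the paper's. The rank gives $\nu=2$. A tree with $\nu=2$ is either a double star $D(a,b)$ or the tree obtained from a path $u c w$ by attaching $p$ leaves to $u$ and $q$ leaves to $w$ (the paper's $P(c,d)$). The quartic factor then gives $\{a,b\}=\{2m,m+1\}$ in the first case and the non-square discriminant $m^2+4m$ in the second.

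One step as written is false. In the diameter-$4$ analysis of Step~1, diameter $4$ forces at least two neighbours $v_i$ of the centre to have $t_i\ge1$. Your $3$-matching observation therefore rules out $k\ge3$ entirely. For $k=2$, both $v_1$ and $v_2$ are non-leaves, which gives exactly the second shape (e.g.\ $P_5$), not a double star. So the conclusion ``in every case $T'$ is a double star'' must be discarded.

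The K\"onig dichotomy you give as a fallback is a correct replacement for the paper's diametral-path classification, and arguably slightly cleaner. With it in place, Step~2 closes the argument. Your sandwich $(m+1)^2<m^2+4m<(m+2)^2$ is an equally valid substitute for the paper's factorization $(m+2-s)(m+2+s)=4$.
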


\begin{proof}
Let \(T=D(2m,m+1),\) and let $R$ be a tree cospectral with $T$. By \eqref{eq:double-star-characteristic},
\[
\phi_T(x)=x^{3m-1}
\left(x^4-(3m+2)x^2+2m(m+1)\right).\]
Hence $\operatorname{rank}A(R)=4$. Since for every forest $X$, \(\operatorname{rank}A(X)=2\nu(X),\) we have $\nu(R)=2$.

We first record the possible forms of a tree with matching number 2. Such a tree is not a star and has diameter at most 4, since a path of length five contains a matching of size 3. If $\operatorname{diam}R=3$, then \(R\cong D(c,d)\) for some positive integers $c,d$. If $\operatorname{diam}R=4$, let $v_0v_1v_2v_3v_4$ be a diametral path. The central vertex $v_2$ has no neighbor outside the chosen path: if $w\notin\{v_1,v_3\}$ were adjacent to $v_2$, then \(v_0v_1, v_2w, v_3v_4\) would form a matching of size 3. Moreover, every neighbor of $v_1$ or $v_3$ outside the path is pendant; otherwise $R$ would contain a path of length at least 5, contradicting $\operatorname{diam}R=4$. Hence $R$ is obtained from the path $v_1v_2v_3$ by attaching $c>0$ pendant vertices to $v_1$ and $d>0$ pendant vertices to $v_3$. Denote this tree by $P(c,d)$.

Suppose first that $R\cong D(c,d)$. By~\eqref{eq:double-star-characteristic},
\[
\phi_R(x)
=x^{c+d-2}
\left(
x^4-(c+d+1)x^2+cd
\right).
\]
Comparison with $\phi_T$ gives \(c+d=3m+1, cd=2m(m+1).\) Thus $c$ and $d$ are the roots of
\[
z^2-(3m+1)z+2m(m+1)
=(z-2m)(z-m-1),
\]
and hence \(\{c,d\}=\{2m,m+1\}.\) Therefore $R\cong T$.

It remains to exclude $R\cong P(c,d)$. The tree $P(c,d)$ has $c+d+3$ vertices. Its $2$-matchings consist of either one pendant edge from each end, or one of the two edges of the central $P_3$ together with a pendant edge at the opposite end. Hence
\[
\phi_{P(c,d)}(x)=
x^{c+d-1}
\left(
x^4-(c+d+2)x^2+(cd+c+d)
\right).
\]
Cospectrality with $T$ would therefore give \(c+d=3m, cd+c+d=2m(m+1),\) and hence \(cd=m(2m-1).\) Thus $c$ and $d$ would be integer roots of \(z^2-3mz+m(2m-1)=0.\)
Its discriminant is
\[
m^2+4m=m(m+4)=(m+2)^2-4.\]
This cannot be a perfect square for $m\geq1$. Indeed, if $m(m+4)=s^2$, then \[(m+2-s)(m+2+s)=4.\] The two factors are positive integers of the same parity, so both must equal $2$, which gives $m=0$, a contradiction.

Hence the second case is impossible, and therefore $R\cong T$.
\end{proof}

\begin{theorem}[A reducible double-star family]\label{thm:reducible-double-star-dgs}
Let $m\ge2$ be an integer, and suppose that neither $m$ nor $2m+2$ is a perfect square. Then $T_m:=D(2m,m+1)$ is DGS but not DS. In particular, for every integer $t\ge0$,
$D(8t+4,\,4t+3)$ is DGS but not DS.
\end{theorem}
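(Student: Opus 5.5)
The plan is to verify the hypotheses of Corollary~\ref{cor:unified-tree-forcing} for $T_m$, conclude that every generalized cospectral mate is a tree, and then invoke Lemma~\ref{lem:tree-level-determined}. The non-DS part will be quoted from Barranca and Barrus \cite{BarrancaBarrus2025}.

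First, set $a=2m$ and $b=m+1$ in \eqref{eq:double-star-characteristic} and \eqref{eq:double-star-coronal}. This gives $\phi_{T_m}=x^{3m-1}(x^2-m)(x^2-2m-2)$ and
\[
\Gamma_{T_m}(x)=\frac{N(x)}{(x^2-m)(x^2-2m-2)},\qquad N(x)=(3m+3)x^3+2(3m+1)x^2-4m(m+1)x-4m(m+1).
\]
Under the hypotheses, $f=x^2-m$ and $g=x^2-2m-2$ are irreducible over $\mathbb Q$ and distinct, and each occurs with exponent one in $\phi_{T_m}$. Reducing $N$ modulo $f$, that is, substituting $x^2=m$, gives $-m(m+1)x+2m(m-1)$. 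This is nonzero and of degree at most one, so it is coprime to the irreducible quadratic $f$. The analogous reduction modulo $g$ is also nonzero, hence coprime to $g$. Therefore the reduced denominator is $fg$, and Lemma~\ref{lem:coronal-denominator} gives $M_{T_m}=fg$. In particular $0$ is not main, so $\kappa_{\mathrm m}(T_m)=2$.

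Second, compute the canonical decomposition of Theorem~\ref{thm:factor-coronal-decomposition}. From $N\equiv A\,g\pmod f$ and $g\equiv -(m+2)\pmod f$, we get
\[
A(x)=\frac{m(m+1)x-2m(m-1)}{m+2}.
\]
The zeroth factor moment is $w_0^{(f)}=m(m+1)/(m+2)$. Since $m(m+1)\equiv(-2)(-1)=2\pmod{m+2}$ and $m+2\ge4$, this number is not an integer. Because $w_0^{(f)}+w_0^{(g)}=w_0(T_m)=3m+3$, the moment $w_0^{(g)}$ is not an integer either. Condition (i) of Theorem~\ref{thm:unified-factor-coronal-obstruction} therefore fails for both orderings $(p,q)$. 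By Corollary~\ref{cor:unified-tree-forcing}, every graph generalized cospectral with $T_m$ is a tree.

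Third, such a tree is in particular cospectral with $T_m$, so by Lemma~\ref{lem:tree-level-determined} it is isomorphic to $T_m$. Hence $T_m$ is DGS. That it is not DS for $m\ge2$ is the cited result \cite{BarrancaBarrus2025}.

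For the explicit family, take $m=4t+2\ge2$. Then $m\equiv2\pmod4$ and $2m+2=8t+6\equiv2\pmod4$, and no perfect square is congruent to $2$ modulo $4$. Finally, $D(2m,m+1)=D(8t+4,4t+3)$.

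The only real obstacle is the partial-fraction computation and the integrality check, together with making sure the coronal is reduced so that $\kappa_{\mathrm m}=2$. If some cancellation had occurred, $\kappa_{\mathrm m}$ would only drop to $1$, and Corollary~\ref{cor:irreducible-main-tree-forcing} would then force trees anyway.
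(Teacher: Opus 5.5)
Your proof is correct and follows the paper's route: you verify $M_{T_m}=f_mg_m$ and $\kappa_{\mathrm m}(T_m)=2$, extract $\Gamma_{f_m}$ from the canonical decomposition, note that its zeroth moment $m(m+1)/(m+2)$ is not an integer, apply Corollary~\ref{cor:unified-tree-forcing} and Lemma~\ref{lem:tree-level-determined}, and cite Barranca--Barrus for the failure of DS. One harmless slip: the $x^2$-coefficient of $N$ is $2(a+b+1)=2(3m+2)$, not $2(3m+1)$, so the constant term of $A$ is $-2m^2/(m+2)$ as in \eqref{eq:Tm-factor-coronal}; this changes neither the zeroth moment nor the nonvanishing of $N$ modulo $f$ and $g$, because both are read off from the $x$-coefficients of the reductions, and these involve only the odd-degree coefficients of $N$.
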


\begin{proof}
Set \(f_m(x):=x^2-m, g_m(x):=x^2-2m-2.\) Substituting $a=2m$ and $b=m+1$ into \eqref{eq:double-star-quartic} gives \(q_{2m,m+1}(x) =x^4-(3m+2)x^2+2m(m+1)= f_m(x)g_m(x).\)
Equation~\eqref{eq:double-star-characteristic} gives
\begin{equation}\label{eq:Tm-characteristic}
\phi_{T_m}(x)= x^{3m-1}f_m(x)g_m(x).
\end{equation}
The hypotheses imply that $f_m$ and $g_m$ are distinct and irreducible over $\mathbb Q$.

Substituting $a=2m$ and $b=m+1$ into \eqref{eq:double-star-coronal} gives
\begin{equation}\label{eq:Tm-factor-coronal}
\Gamma_{T_m}(x)=\frac{m(m+1)x-2m^2}{(m+2)f_m(x)}+\frac{2(m+1)(m+3)x+8(m+1)^2}{(m+2)g_m(x)}.
\end{equation}
Both numerators are nonzero and have degree less than two. Since $f_m$ and $g_m$ are distinct irreducible polynomials, the two summands are reduced and have coprime denominators. Hence the reduced denominator of $\Gamma_{T_m}$ is $f_mg_m$. Lemma~\ref{lem:coronal-denominator} gives \(M_{T_m}(x)=f_m(x)g_m(x).\) By \eqref{eq:Tm-characteristic}, both main factors occur with exponent one in $\phi_{T_m}$, so \(\kappa_{\mathrm m}(T_m)=2.\)

By uniqueness in Theorem~\ref{thm:factor-coronal-decomposition}, \eqref{eq:Tm-factor-coronal} is the canonical factor-coronal decomposition. In particular,
\[\Gamma_{f_m}(x)=\frac{m(m+1)x-2m^2}{(m+2)f_m(x)}.\] Its zeroth factor moment is
\[
\omega_0^{(f_m)}=
\lim_{x\to\infty}x\Gamma_{f_m}(x)
=\frac{m(m+1)}{m+2}
=m-1+\frac{2}{m+2}.
\]
Since $m\geq2$, this is not an integer. But the zeroth moment of the coronal of a graph is its number of vertices. Hence $\Gamma_{f_m}$ cannot be the coronal of any graph component. Thus condition~{\rm (i)} of Theorem~\ref{thm:unified-factor-coronal-obstruction} fails under either ordering of $f_m$ and $g_m$. Corollary~\ref{cor:unified-tree-forcing} therefore implies that every generalized cospectral mate $H$ of $T_m$ is a tree. Since generalized cospectrality includes adjacency cospectrality, Lemma~\ref{lem:tree-level-determined} gives \(H\cong T_m.\) Hence $T_m$ is DGS.

In the notation of Barranca~\cite{Barranca2024} and Barranca and Barrus~\cite{BarrancaBarrus2025}, the double star $D(a,b)$ is denoted by $P_2(a,b)$. Barranca proved that
$P_2(2m,m+1)$ is not DS for every $m\geq2$; see \cite{Barranca2024}, and see also \cite[Theorem~2.3(viii)]{BarrancaBarrus2025}. Hence $T_m$ is DGS but not DS.

For $m=4t+2$ with $t\ge0$, \(m\equiv2\pmod4, 2m+2=8t+6\equiv6\pmod8.\) Squares are congruent to $0$ or $1$ modulo $4$, and to $0$, $1$, or $4$ modulo $8$. Hence neither $m$ nor $2m+2$ is a perfect square. Therefore \(T_m=D(2m,m+1)=D(8t+4,\,4t+3),\) which proves that \(D(8t+4, 4t+3)\) is DGS but not DS.
\end{proof}

For the family $D(r+3,2r+3)$ in~\cite{LinEtAl2026}, the nonzero quartic factor is irreducible over $\mathbb Q$, and the proof exploits this irreducibility. The present family is disjoint from that family, and its nonzero quartic factor is reducible: \(q_{2m,m+1}(x)=f_m(x)g_m(x).\) Thus the corresponding irreducibility argument does not apply.
Instead, the two factors are retained in the canonical factor-coronal decomposition, and $\Gamma_{f_m}$ already fails the zeroth-moment integrality condition. This excludes disconnected generalized cospectral mates without irreducibility of the quartic factor.

\section*{Declaration of competing interest}
The author declares no competing interests.

\section*{Acknowledgments}
This work was supported by the High-level Talent Research Start-up Fund of West Anhui University (Project No. WGKQ2021072).

\section*{Declaration of Use of AI Tools}
During the preparation of this work, I used OpenAI ChatGPT to assist with language refinement, manuscript organization, and supplementary checking and discussion of mathematical exposition, arguments, notation, and formula presentation. After using this tool, I reviewed, verified, and edited the content as needed and take full responsibility for the content of the publication.

\begingroup
\small  
\setlength{\bibsep}{2pt plus 1pt minus 1pt}
\bibliographystyle{cas-model2-names}
\bibliography{cas-refs}
\endgroup

\end{document}